\documentclass[preprint,12pt]{elsarticle}
\usepackage{amsmath,amssymb,amsthm}
\usepackage[
  colorlinks=true,
  linkcolor=blue,
  citecolor=blue,
  urlcolor=blue
]{hyperref}
\usepackage[T1]{fontenc}
\usepackage{lmodern}
\newtheorem{lemma}{Lemma}
\newtheorem{theorem}{Theorem}

\biboptions{numbers,sort&compress}

\begin{document}
\begin{frontmatter}
\title{Short Quartic Exponential Sums in an Intermediate Range and Waring's Problem for Fourth Powers with Almost Equal Summands}
\author[tsufe]{Karimjon Ibrohimjonovich Mirzoabdughafurov\corref{cor1}}
\ead{karimjon2003@mail.ru}
\cortext[cor1]{Corresponding author.}
\address[tsufe]{Tajik State University of Finance and Economics, Dushanbe, Tajikistan}
\begin{abstract}
This paper is devoted to short quartic Weyl sums in an intermediate
range of rational approximations and to their application to Waring's
problem with almost equal summands. We obtain a pointwise
estimate for such sums and use it, together with the Hardy--Littlewood
circle method, to establish an asymptotic formula for the number of
representations of a sufficiently large natural number as a sum of
seventeen fourth powers of integers lying in a short interval about a
common centre. The asymptotic formula is valid for
$N^{6/25+\varepsilon}\leq H\leq N^{1/4-\varepsilon}$ and improves the
previously known admissible lower bound for $H$.
\end{abstract}
\begin{keyword}
Waring's problem \sep almost equal summands \sep fourth powers
\sep short exponential sums \sep Hardy--Littlewood circle method.
\MSC[2020] 11P05 \sep 11L15 \sep 11P55
\end{keyword}
\end{frontmatter}

Notation: $e(z)=e^{2\pi i z}$, $T(\alpha;x,y)=\sum\limits_{x-y<n\leq x} e(\alpha n^4)$,
$$
\gamma(\lambda;x,y)=\int\limits_{-\frac{1}{2}}^{\frac{1}{2}}e\Bigl(\lambda(x-\frac{y}{2}+yt)^4\Bigr)dt,\quad 
S(a,q)=\sum\limits_{n=1}^{q}e\Big(\frac{an^4}{q}\Big),
$$
\[
\mathfrak{S}(N)
 =\sum_{q=1}^{\infty}
  \sum_{\substack{0\le a<q\\(a,q)=1}}
  \frac{S^{17}(a,q)}{q^{17}}
  e\!\left(-\frac{aN}{q}\right).
\]

\section{Introduction}

The classical Waring problem is one of the central problems of additive
number theory. It concerns representations of a natural number as a
sum of a fixed number of like powers of natural numbers. The
Hardy--Littlewood method, together with the subsequent development of
I.~M.~Vinogradov's method of exponential sums, made it possible to
obtain not only solvability results for the corresponding additive
equations but also asymptotic formulae for the number of their
solutions; see, for example,
\cite{vinogradov,KarOATCh,Von,Archipov}.

One natural refinement of the classical Waring problem is obtained by
imposing additional restrictions on the summands. In particular,
Waring's problem with almost equal summands requires all variables to
belong to a short interval about the same point. This formulation is
more delicate than the unrestricted Waring problem, since the length
of the admissible interval is substantially smaller than the natural
scale of variation of the variables.

For cubes with almost equal summands, an asymptotic formula for the
number of representations of a natural number as a sum of nine cubes
was obtained in \cite{MirzoabdughafurovWaring2008}. Localized versions
of Waring's problem for sums of almost equal powers were investigated
by Daemen, who obtained an asymptotic formula and related estimates
for Weyl sums \cite{DaemenAsymptotic,DaemenLower}. Related
Waring--Goldbach problems with almost equal prime summands were studied
by Wei and Wooley \cite{WeiWooley}.

For fourth powers with almost equal summands, an asymptotic formula
for the number of representations of a natural number as a sum of
seventeen fourth powers was obtained in \cite{Azamov2011}. That paper
considered the number of solutions of the equation
\begin{equation}
\label{eq:waring17}
        x_1^4+x_2^4+\cdots+x_{17}^4=N
\end{equation}
under the conditions
\begin{equation}
\label{eq:almost-equal}
        |x_i-N_1|\leq H,\qquad i=1,\ldots,17,\qquad
        N_1=\left(\frac{N}{17}\right)^{1/4}.
\end{equation}
and established an asymptotic formula when
\[
        H\geq N^{13/54+\varepsilon}.
\]

Related questions have also been considered in Waring's problem with
almost proportional summands, where restrictions are imposed on the
quantities $x_i^n$ relative to fixed proportions of the number $N$.
In this direction, asymptotic formulae were obtained, including for
the case of $r=2^n+1$ summands \cite{Rakhmonov2024}. For a special
choice of the proportions, such problems include Waring's problem
with almost equal summands as a particular case.

More precisely, in the case
\[
        n=4,\qquad r=17,\qquad
        \mu_1=\mu_2=\cdots=\mu_{17}=\frac{1}{17},
\]
the bound obtained in \cite{Rakhmonov2024} corresponds to the
condition
\[
        H\geq N^{13/54+\varepsilon}
\]
for seventeen almost equal fourth powers. Thus, in the special case
considered in the present paper, this result gives the previously
known admissible lower bound for $H$.

An essential ingredient in the study of localized Waring problems is
the estimation of short Weyl sums. Earlier estimates for short
quartic Weyl sums in neighbourhoods of rational points were obtained
in
\cite{RakhmonovAzamovMirzo2010}.
Writing
\[
        \alpha=\frac{a}{q}+\lambda,\qquad (a,q)=1,
\]
and assuming that
\[
        \tau\geq 24x^2y,
\]
those results provide an asymptotic approximation in the range
\[
        |\lambda|\leq \frac{1}{8qx^3}
\]
and the estimate
\[
        T(\alpha;x,y)
        \ll q^{3/4}\log q+q^{1/4}x^{1/2}
\]
in the adjacent range
\[
        \frac{1}{8qx^3}
        <|\lambda|
        \leq\frac{1}{q\tau}.
\]
In the circle-method argument below, these results are used to estimate
the contributions from the sets $E_{11}$ and $E_{12}$.

However, after a second application of Dirichlet's rational
approximation lemma, the estimation of the remaining set $E_2$ leads
to a different range of rational approximations. In particular, the
part $E_{22}$ corresponds to relatively small denominators and to
values of $\lambda$ satisfying
\[
        \frac{1}{qx^2y}\ll |\lambda|
        \ll\frac{1}{qy^3}.
\]
This intermediate range extends beyond the upper restriction
\[
        |\lambda|\leq\frac{1}{q\tau}
\]
in the estimates used for the major arcs and requires a different
pointwise treatment.

The first purpose of the present paper is to establish an estimate for
a short quartic Weyl sum in this intermediate range. The proof is
based on an additive Fourier expansion of the rational part of the
exponential sum and an analysis of the resulting oscillatory
integrals. The dual parameters are separated according to whether the
corresponding phase has a stationary point in the interval of
integration. This yields the following estimate.

\begin{theorem}\label{thm:quartic-sum}
Let
\[
x\ge x_0>0,\quad 0<y\le 0.01x,\quad
\alpha=\frac{a}{q}+\lambda,\quad  (a,q)=1,
\]
and suppose that
\[
\frac{1}{qx^2y}\ll |\lambda|\ll \frac{1}{qy^3}.
\]
Then
\[
T\left(\frac{a}{q}+\lambda;x,y\right)
\ll
q^{\frac{3}{4}}\ln q
+
q^{\frac{3}{4}}\ln(2+|\lambda|x^3)
+
q^{\frac{1}{4}}\frac{x}{\sqrt{y}}.
\]
\end{theorem}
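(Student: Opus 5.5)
We follow the route indicated in the introduction: expand the rational part of the phase additively modulo $q$, and estimate the resulting oscillatory integrals. Splitting $n$ into residue classes $n\equiv m \pmod q$, we write
\[
T\Bigl(\frac aq+\lambda;x,y\Bigr)=\sum_{m=1}^{q} e\Bigl(\frac{am^4}{q}\Bigr)\sum_{\substack{x-y<n\le x\\ n\equiv m\ (q)}} e(\lambda n^4).
\]
The inner condition is detected by $\frac1q\sum_{b=1}^{q}e(b(n-m)/q)$. After shifting $b$ to the symmetric range $|b|\le q/2$, this gives
\[
T=\frac1q\sum_{|b|\le q/2} S(a,b;q)\sum_{x-y<n\le x} e\Bigl(\lambda n^4+\frac{bn}{q}\Bigr),\qquad S(a,b;q)=\sum_{m=1}^{q}e\Bigl(\frac{am^4-bm}{q}\Bigr).
\]
We use the standard bound $S(a,b;q)\ll q^{3/4}$, or $q^{1/2+\varepsilon}$-type bounds for complete quartic sums; after normalisation, $q^{3/4}$ is what the statement records. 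Because $e(bn/q)$ is only periodic modulo $q$, we convert each inner sum into integrals by the truncated Poisson formula (van der Corput's lemma). This gives
\[
\sum_{x-y<n\le x} e(f_b(n))=\sum_{|k|\le K}\int_{x-y}^{x} e\bigl(f_b(u)-ku\bigr)\,du+O\bigl(\ln(2+K)\bigr),
\]
where $f_b(u)=\lambda u^4+bu/q$. Since $f_b'$ ranges over an interval of length $\asymp|\lambda|x^3$, we may take $K\asymp 2+|\lambda|x^3$. Combining $b$ and $k$ into the single dual frequency $\beta=k-b/q$, we get
\[
T=\frac1q\sum_{|b|\le q/2}S(a,b;q)\sum_{k}I(\beta)+O\Bigl(q^{3/4}\ln(2+|\lambda|x^3)\Bigr),\qquad I(\beta)=\int_{x-y}^{x}e(\lambda u^4+\beta u)\,du.
\]
The error term here, summed over $b$ and divided by $q$, already yields the middle term of the theorem.

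We then split the frequencies $\beta$ according to whether the phase $\varphi(u)=\lambda u^4+\beta u$ has a stationary point in $[x-y,x]$, that is, whether $-\beta\in 4\lambda[(x-y)^3,x^3]$.

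\emph{Frequencies without a stationary point.} By the first-derivative test,
\[
I(\beta)\ll \min\Bigl(y,\ \frac{1}{\operatorname{dist}(-\beta,\,4\lambda[(x-y)^3,x^3])}\Bigr).
\]
Because the numbers $\beta=k-b/q$ are spaced by $1/q$, the sum of these bounds over all such $\beta$ is a harmonic sum of size $\ll q\ln q+q\ln(2+|\lambda|x^3)$. Multiplying by $q^{3/4}/q$ gives the terms $q^{3/4}\ln q+q^{3/4}\ln(2+|\lambda|x^3)$. The bound $I\ll y$ is needed only for $O(1)$ values of $\beta$ closest to the endpoints. Their contribution is controlled because $y\ll|\lambda|^{-1/3}q^{-1/3}$ by the hypothesis $|\lambda|\ll 1/(qy^3)$, and this should be absorbed into the third term.

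\emph{Frequencies with a stationary point.} On $[x-y,x]$ we have $\varphi''(u)=12\lambda u^2\asymp|\lambda|x^2$, so the second-derivative test gives $I(\beta)\ll(|\lambda|x^2)^{-1/2}$. The number of admissible $\beta$ (with spacing $1/q$) is at most
\[
\ll q|\lambda|\bigl(x^3-(x-y)^3\bigr)+1\ll q|\lambda|x^2y+1.
\]
By the lower hypothesis $|\lambda|\gg 1/(qx^2y)$, the term $q|\lambda|x^2y$ dominates. The contribution is therefore
\[
\ll \frac{q^{3/4}}{q}\cdot q|\lambda|x^2y\cdot(|\lambda|x^2)^{-1/2}=q^{3/4}|\lambda|^{1/2}xy.
\]
Using the upper bound $|\lambda|\ll 1/(qy^3)$, we get $|\lambda|^{1/2}\ll q^{-1/2}y^{-3/2}$, so this is $\ll q^{1/4}x/\sqrt{y}$, the third term.

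The main obstacle is the bookkeeping of the Poisson truncation together with the $b$-shift, so that all the logarithmic error terms come out cleanly. This is especially delicate near the endpoints of the stationary range, where the first- and second-derivative bounds must be glued, for instance by taking the minimum of the two bounds for the $O(1)$ borderline frequencies. The other point to check is that the complete sums really satisfy $S(a,b;q)\ll q^{3/4}$ uniformly in $b$, which is the source of the powers of $q$.
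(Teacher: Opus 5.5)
Your route is the paper's: additive expansion with complete sums bounded by Hua's $q^{3/4}$ (Lemma~\ref{orsumm}, uniform in $b$ since $(a,q)=1$), truncated Poisson summation, merging $(b,k)$ into a single dual frequency of spacing $1/q$, and a split according to whether the phase has a stationary point. Your uniform truncation $|k|\le K$, instead of the paper's tight range $H_{1b}\le h\le H_{2b}$, is harmless: the harmonic sum becomes $\ll q\ln(qK)$, which the first two terms absorb.

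However, one step fails as written: bounding the $O(1)$ non-stationary frequencies next to the stationary range by $I\ll y$. Each such frequency contributes $q^{-1}\cdot q^{3/4}\cdot y=q^{-1/4}y$. The inequality $q^{-1/4}y\ll q^{1/4}x/\sqrt{y}$ is equivalent to $y^3\ll qx^2$. The hypotheses do not give this; they only yield $y^3\ll (q|\lambda|)^{-1}\ll x^2y$. For example, $q=1$, $y=0.01x$, $\lambda=1/(x^2y)$ satisfies the hypotheses, and the theorem asserts $T\ll\sqrt{x}$, but your bound for these terms is $\asymp x$. This is not a corner case. In the application with small $q$, the term $q^{-1/4}y\asymp H$ would already exceed the required $H^{7/8}$.

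The fix, which you only hint at in your last paragraph, is to bound these borderline integrals by the second-derivative test. Since $\varphi''(u)=12\lambda u^2$ does not depend on $\beta$, Lemma~\ref{vtorproiz} gives $I(\beta)\ll(|\lambda|x^2)^{-1/2}$ for \emph{every} $\beta$. The borderline terms then contribute $\ll q^{-1/4}(|\lambda|x^2)^{-1/2}\ll q^{-1/4}(qy)^{1/2}\le q^{1/4}x/\sqrt{y}$, using the lower bound on $|\lambda|$. This is exactly what the paper does. It puts every $m$ with $\operatorname{dist}(m,\Delta)\le 1$ (that is, every $\beta$ within $1/q$ of the stationary range) into $\mathcal M_0$ and applies the second-derivative bound there. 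The remaining frequencies are at distance $>1/q$, and your harmonic-sum argument then goes through.

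Minor points:
\begin{itemize}
\item Take $-q/2<b\le q/2$; with $|b|\le q/2$ the residue $q/2$ is counted twice when $q$ is even.
\item With $\beta=k-b/q$ the phase is $\lambda u^4-\beta u$, not $\lambda u^4+\beta u$.
\item Your aside about $q^{1/2+\varepsilon}$ bounds for complete quartic sums is false for general $q$, e.g.\ $S(a,0;p^4)=p^3=q^{3/4}$, though you never use it.
\end{itemize}
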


The pointwise estimate in Theorem~\ref{thm:quartic-sum}
provides the bound required for the intermediate range arising in
the circle-method analysis of $J(N,H)$. As a consequence, the
previously known condition
\[
        H\geq N^{13/54+\varepsilon}
\]
can be replaced by
\[
        H\geq N^{6/25+\varepsilon}.
\]

This leads to our second main result.
\begin{theorem}\label{varing4}
Let $N$ be a sufficiently large natural number and let
$\varepsilon>0$ be a sufficiently small fixed number. Then, for
\[
        N^{6/25+\varepsilon}
        \leq H
        \leq N^{1/4-\varepsilon},
\]
the number $J(N,H)$ of integer solutions of
\eqref{eq:waring17} satisfying \eqref{eq:almost-equal} has the
asymptotic formula
\begin{align*}
J(N,H)&= \frac{B\mathfrak{S}(N)H^{16}}{N^{\frac{3}{4}}}
+O\left(\frac{H^{16}}{N^{\frac{3}{4}}L^{16}}\right),
\end{align*}
where \(L=\log N\), \(B=\frac{\sqrt[4]{17^3}}{4\cdot16!}\sum\limits_{k=0}^{8}(-1)^k\binom{17}{k}(17-2k)^{16}\) is a positive absolute
constant, and \(\mathfrak{S}(N)\) is the singular series,
which is bounded below by a positive constant.
\end{theorem}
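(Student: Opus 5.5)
The statement immediately above is Theorem~\ref{varing4}. I will prove it by the Hardy--Littlewood circle method, with Theorem~\ref{thm:quartic-sum} as the new input on the intermediate arcs. Write $x=N_1+H$, $y=2H+1$ (up to integrality), so that
\[
J(N,H)=\int_0^1 T^{17}(\alpha;x,y)e(-\alpha N)\,d\alpha .
\]
By Dirichlet's lemma with parameter $\tau\ge 24x^2y$ (say $\tau\asymp N^{1/2}H L^{c}$), every $\alpha$ has $|\alpha-a/q|\le 1/(q\tau)$ with $q\le\tau$. The major arcs $E_1$ are those with $q\le L^{c}$. I split them as in the introduction:
\begin{itemize}
\item $E_{11}$: $|\lambda|\le 1/(8qx^3)$, where the asymptotic approximation of \cite{RakhmonovAzamovMirzo2010} gives $T\approx q^{-1}S(a,q)\,y\,\gamma(\lambda;x,y)$;
\item $E_{12}$: the adjacent range, where $T\ll q^{3/4}\log q+q^{1/4}x^{1/2}$.
\end{itemize}

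On $E_{11}$ I replace $T^{17}$ by its approximation. Completing the sum over $q$ gives $\mathfrak S(N)$, with tail $\ll L^{-c'}$. The singular integral
\[
\int y^{17}\gamma^{17}(\lambda)\,e(-\lambda N)\,d\lambda
\]
is evaluated by the substitution $u_i=(x-y/2+yt_i)^4$. Linearizing $u_i\approx N_1^4+4N_1^3 h_i$ reduces it to the density at $0$ of a sum of $17$ uniform variables on $[-H,H]$. The Irwin--Hall formula then produces the constant $B$, the factor $H^{16}$, and the factor $(4N_1^3)^{-1}=17^{3/4}N^{-3/4}/4$. The linearization error is relative $O(H^2/N^{1/2})$, which is acceptable because $H\le N^{1/4-\varepsilon}$.

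On $E_{12}$ I bound $|T|^{17}\le \max|T|^{15}\int|T|^2$ (or use $|T|^{16}$ and an eighth-moment Hua-type bound on short intervals). I also use that $\gamma$ decays like $\min(1,(|\lambda| x^3y)^{-1})$ near the boundary. This shows the contribution is $o(H^{16}N^{-3/4}L^{-16})$.

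The minor arcs $E_2$ ($q>L^c$) are handled by a second application of Dirichlet's lemma with a smaller parameter $\tau_2\asymp y^3 H^{-\delta}$, giving a new approximation $a/q$, $q\le\tau_2$, $|\lambda|\le 1/(q\tau_2)$. I split $E_2$ into three parts.
\begin{itemize}
\item $E_{21}$: large $q$ (say $q>y^{\theta}$). Here a Weyl-type bound for short quartic sums (Weyl differencing three times, localized to $[x-y,x]$) gives $|T|\ll y^{1-\sigma}$. This is combined with the mean value $\int_0^1|T|^{16}\ll y^{12+\varepsilon}$, obtained from the Vinogradov mean value theorem, or Hua after shifting $n=x-y+m$ to a degree-4 polynomial in $m$ with small coefficients.
\item $E_{22}$: small $q$ and $1/(qx^2y)\ll|\lambda|\ll 1/(qy^3)$. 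Here I apply Theorem~\ref{thm:quartic-sum} pointwise:
\[
|T|\ll q^{3/4}L+q^{1/4}x y^{-1/2}.
\]
\item The remaining small-$\lambda$ part is already covered by $E_{12}$-type bounds.
\end{itemize}
In each case the minor-arc integral is estimated as $\sup|T|\cdot\int_0^1|T|^{16}$, and this must be $\ll H^{16}N^{-3/4}L^{-16}$, i.e. $\sup|T|\ll H^{4-\varepsilon}N^{-3/4}$ up to the mean-value loss.

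The main obstacle is balancing these exponents. With $x\asymp N^{1/4}$ and $q\le\tau_2$, the term $q^{1/4}xy^{-1/2}$ must be below the threshold coming from the mean-value bound. The term $q^{3/4}L$ forces the choice of $\tau_2$, and the Weyl bound on $E_{21}$ must cover $q$ beyond that. I will first try the hypothesis $|T|^{16}$-mean $\ll H^{12+\varepsilon}$ with threshold $\sup|T|\ll H^{4}N^{-3/4}\cdot H^{-\varepsilon}$. I then optimize the cut between $E_{21}$ and $E_{22}$ and expect the critical equation to give $H=N^{6/25}$. If the exponents do not close, I will instead use the pruning $\int_{E_{22}}|T|^{17}\le\sup|T|^{9}\int|T|^8$, with Hua's eighth moment over the arcs. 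This gains from the measure $\sum_q q\cdot(qy^3)^{-1}$ of $E_{22}$.
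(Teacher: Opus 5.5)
Your architecture matches the paper's: a first Dirichlet approximation with $\tau\asymp x^2y$, a second with parameter of order $H^3$, Weyl's inequality (Lemma~\ref{Veyl}) plus the sixteenth moment (Lemma~\ref{osrzn}) for large denominators, and Theorem~\ref{thm:quartic-sum} plus the sixteenth moment for small denominators. The exponent $6/25$ comes from requiring $H^{7/8}\cdot H^{12}\ll H^{16}N^{-3/4}L^{-16}$.

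There is, however, a genuine gap, caused by putting the major arcs at $q\le L^{c}$.
\begin{itemize}
\item For $q\le H$, Lemma~\ref{Veyl} gives only $H^{1+\varepsilon}q^{-1/8}$. Reaching $H^{7/8+\varepsilon}$ at the lower end of the range therefore forces the cut between $E_{21}$ and $E_{22}$ to be essentially at $q\asymp H$; the paper uses $r>Q=H/L$.
\item Hence every $\alpha$ with $|\alpha-a/q|\le 1/(8qx^3)$ and $L^{c}<q\le H/L$ lies in your $E_2$. It keeps the denominator $q$ under the second Dirichlet approximation, so it falls into your ``remaining small-$\lambda$ part''.
\item There $\lambda$ is below the ranges of both Lemma~\ref{exposumm2} and Theorem~\ref{thm:quartic-sum}. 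Lemma~\ref{exposumm1} gives $T=\frac{y}{q}S(a,q)\gamma(\lambda;x,y)+O(q^{1/2+\varepsilon})$.
\item At $\alpha=a/q$ with $q=p^4$, $p$ odd, one has $|S(a,q)|=q^{3/4}$, so $|T|\asymp Hq^{-1/4}$, with no power saving.
\end{itemize}
On this piece the estimate $\sup|T|\cdot\int_0^1|T|^{16}$ yields only about $H^{13+\varepsilon}$. That is $\ll H^{16}N^{-3/4}L^{-16}$ only if $H\gg N^{1/4}$, which is outside the admissible range. So the claim that this part is ``already covered by $E_{12}$-type bounds'' is false, and your uniform sup-times-mean-value treatment of the minor arcs fails there.

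The fix is the paper's choice $Q=H/L$ for the major arcs. Then all arcs $|\lambda|\le 1/(8qx^3)$ with $q\le H/L$ belong to $E_{11}$ and are handled through the asymptotic formula of Lemma~\ref{exposumm1}, not through a supremum.
\begin{itemize}
\item The error term $O(q^{1/2+\varepsilon})$ is affordable for $q\le H/L$.
\item Using $|\gamma(\lambda;x,y)|\ll\min\bigl(1,(|\lambda|x^3y)^{-1}\bigr)$, the main-term contribution from $L^{c}<q\le H/L$ is $\ll\sum_{q>L^{c}} q\,(yq^{-1/4})^{17}(x^3y)^{-1}\ll H^{16}N^{-3/4}\sum_{q>L^{c}}q^{-13/4}$.
\end{itemize}
The $\gamma$-decay you invoke under $E_{12}$ is exactly the tool needed here, but it must be integrated rather than used through a supremum.

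With this choice, any $\alpha\in E_2$ whose second denominator satisfies $r\le Q$ automatically has $|\beta|>1/(r\tau)$. That is precisely the lower bound Theorem~\ref{thm:quartic-sum} requires. Your first plan then closes without the fallback: Theorem~\ref{thm:quartic-sum} gives $\ll H^{3/4}L^{1/4}+N^{1/4}H^{-1/4}\ll H^{7/8}$ once $H\ge N^{2/9}$. The binding constraint is the Weyl bound on $E_{21}$.

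Two minor points:
\begin{itemize}
\item With $\tau_2\asymp y^3H^{-\delta}$, your $E_{22}$ contains $|\lambda|$ up to $H^{\delta}/(qy^3)$, outside the hypothesis of Theorem~\ref{thm:quartic-sum}. The loss of $H^{\delta/2}$ is affordable for small $\delta$, but the paper's parameter $H^3$ avoids it.
\item Taking $\tau\asymp x^2yL^{c}$ weakens the lower bound on $|\lambda|$ by $L^{c}$. This is harmless.
\end{itemize}
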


The latter assertion follows directly from \cite[Theorem~4.6]{Von}.

The proof of Theorem~\ref{varing4} is based on the
Hardy--Littlewood circle method. The contributions from the major
arcs are treated by previously known estimates, while
Theorem~\ref{thm:quartic-sum} supplies the additional saving required
for the intermediate range occurring in the minor-arc analysis.

\section{Auxiliary lemmas}
\begin{lemma}\label{int1} {\rm \cite{KarOATCh}.} For every real $\kappa$ and every integer $m$,
the following equality holds:
$$
\int\limits_{-\kappa}^{1-\kappa}e(\alpha m)d\alpha=
\left\{
\begin{array}{ll}
1, & \hbox{if } m=0,\\
0, & \hbox{if } m\ne 0.
\end{array}
\right.
$$
\end{lemma}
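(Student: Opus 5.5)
The identity follows from direct integration, splitting into the two cases $m=0$ and $m\neq 0$. The interval of integration $[-\kappa,1-\kappa]$ always has length $1$, whatever the value of $\kappa$.

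If $m=0$, the integrand is $e(0)=1$ for every $\alpha$. The integral is then the length of the interval, namely $(1-\kappa)-(-\kappa)=1$.

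If $m\neq 0$, the function $\alpha\mapsto e(\alpha m)/(2\pi i m)$ is an antiderivative of $e(\alpha m)$, since $\frac{d}{d\alpha}e^{2\pi i m\alpha}=2\pi i m\,e^{2\pi i m\alpha}$. This gives
\[
\int_{-\kappa}^{1-\kappa}e(\alpha m)\,d\alpha=\frac{e(m(1-\kappa))-e(-m\kappa)}{2\pi i m}=\frac{e(-m\kappa)\bigl(e(m)-1\bigr)}{2\pi i m}.
\]
Because $m$ is an integer, $e(m)=e^{2\pi i m}=1$, so the numerator vanishes and the integral is $0$.

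An alternative argument is that $e(\alpha m)$ is $1$-periodic in $\alpha$. Its integral over any interval of length $1$ therefore equals the integral over $[0,1]$, and the same antiderivative computation there gives $0$ when $m\neq 0$.

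There is no real obstacle. The only point needing care is that integrality of $m$ is used exactly once, to get $e(m)=1$, and that $\kappa$ is allowed to be an arbitrary real number.
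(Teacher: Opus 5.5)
Your proof is correct: the $m=0$ case is the length of the interval, and for $m\neq 0$ the antiderivative computation, with $e(m)=1$ for integer $m$, gives $0$. The paper does not prove this lemma itself but only cites it from Karatsuba's book, and your direct integration is the standard argument behind that citation.
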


\begin{lemma}\label{osrzn} {\rm \cite{Rakhmonovsredzn2023}.} For $x\geq x_0>0$, $\sqrt{x}<y\leq
0.01x$, the estimate
 $$
 \int_{0}^{1}|T_n(\alpha;x,y)|^{2^k}d\alpha\ll y^{2^k-k+\varepsilon}, 1\le k\le n
 $$
holds, where
 \[
T_n(\alpha;x,y)=\sum_{x-y<m\le x}e(\alpha m^n).
\]
\end{lemma}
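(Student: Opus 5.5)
The plan is to prove the bound by induction on $k$, following Hua's classical argument with Weyl differencing. Only two points need care for short intervals. First, the divisor-function bounds that arise are in terms of $x$ rather than $y$; the hypothesis $y>\sqrt{x}$ lets us absorb them, since $x^{\varepsilon}\le y^{2\varepsilon}$. Second, the differenced polynomials must not vanish, and this is where $y\le 0.01x$ enters.

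\emph{Base case and differencing identity.} For $k=1$, Lemma~\ref{int1} gives $\int_0^1|T_n|^2\,d\alpha=\lfloor y\rfloor+O(1)\ll y$, which is $y^{2-1}$. For the inductive step we use repeated Weyl differencing via Cauchy's inequality. For $1\le j\le n-1$ this gives
\[
|T_n(\alpha;x,y)|^{2^j}\le (2y)^{2^j-j-1}\sum_{|h_1|<y}\cdots\sum_{|h_j|<y}\ \sum_{m\in I(\mathbf h)} e\bigl(\alpha\, h_1\cdots h_j\, p_j(m;\mathbf h)\bigr).
\]
Here $I(\mathbf h)\subset(x-y,x]$ is an interval, and $p_j(m;\mathbf h)$ is the $j$-fold difference of $m^n$ divided by $h_1\cdots h_j$. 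It is a polynomial of degree $n-j$ in $m$ with positive leading coefficient $n!/(n-j)!$. Every argument at which $m^n$ is evaluated lies in $(x-y,x]$, and $|h_i|<y\le 0.01x$. Writing $p_j$ as an average of $\frac{n!}{(n-j)!}\xi^{n-j}$ over points $\xi\in[x-y,x]$, we get $p_j(m;\mathbf h)\asymp x^{n-j}>0$. In particular $p_j$ never vanishes.

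\emph{Inductive step.} Assume the bound for $k=j$ and write $|T_n|^{2^{j+1}}=|T_n|^{2^j}\cdot|T_n|^{2^j}$. Insert the differencing identity into the first factor and expand the second as $\sum_\nu c(\nu)e(-\alpha\nu)$, where $c(\nu)\ge0$ counts the representations of $\nu$ as a signed sum of $2^j$ $n$-th powers from the interval.

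Consider first the tuples $\mathbf h$ with some $h_i=0$. There are $\ll y^{j-1}$ of them, and the inner sum over $m$ is at most $y$. By the inductive hypothesis their contribution is
\[
\ll y^{2^j-j-1}\cdot y^{j}\int_0^1|T_n|^{2^j}\,d\alpha\ll y^{2^j-1}\,y^{2^j-j+\varepsilon}=y^{2^{j+1}-(j+1)+\varepsilon}.
\]
Now suppose all $h_i\ne0$. By Lemma~\ref{int1} the integral becomes
\[
\sum_{\nu\ne0} r(\nu)\,c(\nu),\qquad r(\nu)=\#\{(\mathbf h,m):\ h_1\cdots h_j\,p_j(m;\mathbf h)=\nu\}.
\]
The value $\nu=0$ cannot occur, since $p_j\ne0$. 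We have $|\nu|\ll y^jx^{n-j}\le x^n$. Each $h_i$ is a divisor of $\nu$, and once $\mathbf h$ is fixed, $m$ is a root of a nonconstant polynomial, so there are at most $n$ choices. Hence $r(\nu)\ll \tau_{j}(|\nu|)\ll x^{\varepsilon}\ll y^{2\varepsilon}$, using $y>\sqrt x$. Finally $\sum_\nu c(\nu)=|T_n(0)|^{2^j}\le y^{2^j}$, so this part contributes
\[
\ll y^{2^j-j-1}\,y^{2\varepsilon}\,y^{2^j}=y^{2^{j+1}-(j+1)+2\varepsilon}.
\]
Renaming $\varepsilon$ completes the induction up to $k=n$. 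In the last step $j=n-1$, $p_{n-1}$ is linear in $m$ and of size $\asymp x$, so the same argument applies.

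\emph{Main obstacle.} The only step that needs care is showing that $p_j(m;\mathbf h)\ne0$ uniformly over the short interval when the $h_i$ may have either sign. This is what removes the diagonal term $\nu=0$, which in Hua's original setting is handled differently. I would verify it through the integral (mean-value) representation of the $j$-th difference, using $y\le0.01x$ to keep all evaluation points $\xi$ at size $\asymp x$.
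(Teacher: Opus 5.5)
Your proof is correct. The paper gives no argument for this lemma and only quotes it from Rakhmonov's paper; what you supply is the standard Hua induction via Weyl differencing. In it, $y\le 0.01x$ gives $p_j(m;\mathbf h)\asymp x^{n-j}>0$, and $y>\sqrt x$ turns the divisor bound $|\nu|^{\varepsilon}\ll x^{n\varepsilon}$ into $y^{2n\varepsilon}$, which is exactly the role these hypotheses are there to play.

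There are two harmless slips. First, the number of admissible $\mathbf h$ is bounded by $2^j\tau_{j+1}(|\nu|)$ rather than $\tau_j(|\nu|)$, which is still $\ll|\nu|^{\varepsilon}$. Second, the nonvanishing of $p_j$ is not really the obstacle you describe: pairs $(\mathbf h,m)$ with all $h_i\ne0$ and $p_j=0$ number $\ll y^{j}$, and could be absorbed exactly like the terms with some $h_i=0$.
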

\begin{lemma}\label{lemmaDirichle} {\rm \cite{KarOATCh}.} \textbf{(Dirichlet's rational approximation lemma).}
For every real number \(\alpha\) and every parameter \(P\geq 1\),
there exist coprime integers \(a\) and \(q\), $1\leq q\leq P,$
such that
\[
\left|\alpha-\frac{a}{q}\right|\leq \frac{1}{qP}.
\]
\end{lemma}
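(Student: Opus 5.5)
The plan is to deduce the statement from Dirichlet's pigeonhole principle applied to the fractional parts of the multiples of $\alpha$. It suffices to treat integer $P$. Indeed, for real $P\ge 1$ we may apply the integer case with $Q=\lfloor P\rfloor$, which gives $q\le Q\le P$. The error bound is then $1/(q(Q+1))$, and since $Q+1>P$ this is at most $1/(qP)$. So the integer-type argument must really produce the sharper bound $1/(q(Q+1))$, and I will aim for that.

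Fix an integer $Q\ge 1$ and consider the $Q+2$ numbers
\[
0,\quad 1,\quad \{k\alpha\}\ (k=1,\dots,Q),
\]
all lying in $[0,1]$. Each can be written in the form $k\alpha-m$ with integers $0\le k\le Q$ and $m$: take $(k,m)=(0,0)$ for $0$, $(k,m)=(0,-1)$ for $1$, and $m=\lfloor k\alpha\rfloor$ for $\{k\alpha\}$. Partition $[0,1)$ into the $Q+1$ intervals $[j/(Q+1),(j+1)/(Q+1))$, with the point $1$ attached to the last interval. By the pigeonhole principle, two of these $Q+2$ numbers lie in the same closed interval of length $1/(Q+1)$. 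Subtracting the corresponding representations gives integers $k$ and $m$ with $|k|\le Q$ and
\[
|k\alpha-m|\le \frac{1}{Q+1}.
\]

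Next we must check that $k\ne 0$. If $k=0$, then the two chosen numbers are $0$ and $1$, and they would lie in a common interval of length $1/(Q+1)<1$, which is impossible. Replacing $(k,m)$ by $(-k,-m)$ if necessary, we may assume $1\le k\le Q$. Now let $d=(k,m)$ and set $q=k/d$ and $a=m/d$. Then $(a,q)=1$ and $1\le q\le k\le Q$. Moreover,
\[
|q\alpha-a|=\frac{|k\alpha-m|}{d}\le \frac{1}{Q+1},
\]
and dividing by $q$ gives $|\alpha-a/q|\le 1/(q(Q+1))\le 1/(qP)$.

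There is no real obstacle here. The only delicate points are bookkeeping. First, the pigeonhole step must use $Q+1$ boxes, so that the bound is $1/(Q+1)$ rather than $1/Q$; this is what makes the passage from real $P$ to $\lfloor P\rfloor$ work. Second, we must verify that the two colliding points are not $0$ and $1$, which is what guarantees $k\ne 0$.
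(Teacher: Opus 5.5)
Your proof is correct and complete. In particular:
\begin{itemize}
\item using $Q+2$ points and $Q+1$ boxes gives the bound $1/(Q+1)$;
\item the only two points sharing the multiplier $k=0$ are $0$ and $1$, which lie in different boxes because $Q\ge 1$, so $k\neq 0$;
\item dividing by $\gcd(k,m)$ makes $(a,q)=1$ without weakening $|q\alpha-a|\le 1/(Q+1)$;
\item the passage from real $P$ to $Q=\lfloor P\rfloor$ works because $Q+1>P$.
\end{itemize}

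The paper gives no proof of this lemma; it only cites Karatsuba's monograph. So there is no argument in the text to compare yours against.

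The other standard route is through continued fractions. Take the last convergent $p_s/q_s$ of $\alpha$ with $q_s\le P<q_{s+1}$. Then $|\alpha-p_s/q_s|\le 1/(q_sq_{s+1})<1/(q_sP)$, and coprimality is automatic. That route needs the theory of convergents. It also needs a separate remark when $\alpha$ is rational and the expansion terminates with denominator at most $P$; then you simply take $a/q=\alpha$.

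Your pigeonhole argument is more elementary and self-contained. The price is the explicit gcd reduction and the $Q+1$-box bookkeeping, both of which you handled correctly.
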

In Lemmas 4 and 5, we use the following notation and assumptions:
\[
x\ge x_0>0,\  
0<y\le 0.01x,\  
\alpha=\frac{a}{q}+\lambda, \ 
(a,q)=1, 
q\le\tau, 
|\lambda|\le\frac{1}{q\tau}.
\] 
\begin{lemma}\label{exposumm1} {\rm \cite{RakhmonovAzamovMirzo2010}.}
Let $\tau\geq 24x^2y$ and
$|\lambda|\le \frac{1}{8q x^3}$. Then
$$
T(\alpha,x,y)=\frac{y}{q}S(a,q)\gamma(\lambda;x,y)+O(q^{\frac{1}{2}+\varepsilon
}), 
$$
$$ 
\gamma(\lambda;x,y)=\int\limits_{-\frac{1}{2}}^{\frac{1}{2}}e\Bigl(\lambda(x-\frac{y}{2}+yt)^4\Bigr)dt.
$$
\end{lemma}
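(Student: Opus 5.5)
The plan is to separate the rational part $a/q$ from the smooth part $\lambda$ by a finite additive Fourier expansion modulo $q$. Then each resulting sum over $n$ is treated by first-derivative methods, which works because the hypothesis $|\lambda|\le\frac{1}{8qx^3}$ forces the derivative of the phase $\lambda u^4$ to be tiny on $(x-y,x]$. For each $n$ we use the orthogonality relation
\[
e\Bigl(\frac{an^4}{q}\Bigr)=\frac1q\sum_{|b|\le q/2}S(a,b;q)\,e\Bigl(-\frac{bn}{q}\Bigr),\qquad S(a,b;q)=\sum_{r=1}^{q}e\Bigl(\frac{ar^4+br}{q}\Bigr),
\]
where $b$ runs over a complete residue system centred at $0$, so $S(a,0;q)=S(a,q)$. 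Substituting this gives the exact identity
\[
T(\alpha;x,y)=\frac1q\sum_{|b|\le q/2}S(a,b;q)\,W_b,\qquad W_b=\sum_{x-y<n\le x}e\Bigl(\lambda n^4-\frac{bn}{q}\Bigr).
\]

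\textbf{The term $b=0$.} Put $f(u)=\lambda u^4$. On $[x-y,x]$ we have $|f'(u)|=4|\lambda|u^3\le \frac{4x^3}{8qx^3}=\frac{1}{2q}\le\frac12$, and $f'$ is monotone. The Euler--Maclaurin (or van der Corput) comparison of a sum with an integral therefore gives
\[
W_0=\int_{x-y}^{x}e(\lambda u^4)\,du+O(1).
\]
The substitution $u=x-\frac y2+yt$ turns the integral into $y\gamma(\lambda;x,y)$. Hence the $b=0$ term contributes $\frac yq S(a,q)\gamma(\lambda;x,y)+O\bigl(|S(a,q)|/q\bigr)$, and the error here is $O(1)$ trivially.

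\textbf{The terms $b\ne0$.} The phase $g_b(u)=\lambda u^4-\frac{bu}{q}$ has $g_b'(u)=4\lambda u^3-\frac bq$. By the same bound $|4\lambda u^3|\le\frac1{2q}$ we get $|g_b'(u)|\ge\frac{|b|}{2q}$. We also have $|g_b'(u)|\le \frac12+\frac1{2q}$, so $\|g_b'(u)\|\gg |b|/q$ for $1\le|b|\le q/2$. Finally $g_b'$ is monotone, since $g_b''=12\lambda u^2$ has constant sign. The Kusmin--Landau inequality then yields $W_b\ll q/|b|$, uniformly in $x,y$. It follows that
\[
T(\alpha;x,y)-\frac yq S(a,q)\gamma(\lambda;x,y)\ll 1+\sum_{1\le|b|\le q/2}\frac{|S(a,b;q)|}{|b|}.
\]
The hypotheses $\tau\ge 24x^2y$ and $q\le\tau$ are needed only to keep the setting consistent with Lemma~\ref{lemmaDirichle}; they play no role in the estimate itself.

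\textbf{Main obstacle.} It remains to bound the complete quartic sums with a linear term uniformly in $b$:
\[
S(a,b;q)\ll q^{1/2+\varepsilon},\qquad (a,q)=1.
\]
For prime $q=p$ this is Weil's bound, since $ar^4+br$ is a nonlinear polynomial modulo $p$ when $p>4$. For general $q$ we would use multiplicativity (the Chinese remainder theorem) to reduce to prime powers $p^k$. For these we evaluate the sums by the standard stationary-phase/Hensel-lifting argument for $k\ge2$ (as in Hua's treatment of complete sums, or the Cochrane--Zheng bounds), which gives $|S(a,b;p^k)|\le C p^{k/2}$ with $C$ depending only on the degree. Multiplying over $p\mid q$ produces a factor $C^{\omega(q)}\ll q^{\varepsilon}$. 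Summing $q^{1/2+\varepsilon}/|b|$ over $1\le|b|\le q/2$ costs a further factor $\log q$, which is absorbed into $q^{\varepsilon}$. This gives the error $O(q^{1/2+\varepsilon})$ and completes the proof. The delicate point is the uniformity of this complete-sum bound at small primes and high prime powers; that is where I would concentrate the detailed work, citing Hua's lemma on complete trigonometric sums if a self-contained proof becomes too long.
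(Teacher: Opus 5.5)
\textbf{There is a genuine gap in the complete-sum step.}

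The reduction itself is correct. The expansion modulo $q$, the comparison $W_0=\int_{x-y}^{x}e(\lambda u^4)\,du+O(1)$ (from $|f'|\le 1/(2q)$), and the Kusmin--Landau bound $W_b\ll q/|b|$ together give
\[
T(\alpha;x,y)-\frac yq S(a,q)\gamma(\lambda;x,y)\ll 1+\sum_{1\le|b|\le q/2}\frac{|S(a,b;q)|}{|b|}.
\]
The paper only quotes this lemma from \cite{RakhmonovAzamovMirzo2010}, so there is no in-paper proof to compare with.

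The next step fails. The bound $S(a,b;q)\ll q^{1/2+\varepsilon}$ uniformly in $b\not\equiv0\pmod q$ is false, and so is the prime-power claim $|S(a,b;p^k)|\le Cp^{k/2}$. The Hensel/stationary-phase argument needs nondegenerate critical points. When $p\mid b$, however, the congruence $4ar^3+b\equiv0\pmod p$ has the triple root $r\equiv0$, and this happens for every prime, not just small ones. For a concrete counterexample take $p\ge5$, $q=p^4$, $b=p^3$, so that $1\le b\le q/2$:
\begin{itemize}
\item for $p\nmid r$, write $r=u+p^2v$; the sum over $v$ is $\sum_{v \bmod p^2}e(4au^3v/p^2)=0$;
\item each of the $p^3$ terms with $r=ps$ equals $1$.
\end{itemize}
Hence $S(a,p^3;p^4)=p^3=q^{3/4}$. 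Your CRT step also produces local factors with $p^k\mid b$, i.e. pure sums such as $S(a',p^4)=p^3$. If you fall back on the uniform bound $q^{3/4}$ of Lemma~\ref{orsumm}, you only get $O(q^{3/4}\log q)$. That is the quality of Lemma~\ref{exposumm2}, not $O(q^{1/2+\varepsilon})$.

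\textbf{How to repair it.} Large complete sums occur only when $(b,q)$ is large, and then $1/|b|\le 1/(b,q)$ compensates. What you need is
\[
S(a,b;q)\ll q^{1/2+\varepsilon}(b,q)^{1/2},\qquad (a,q)=1.
\]
For $p\ge5$ this can be proved as follows.
\begin{itemize}
\item If $p\nmid b$, your argument gives $|S(a,b;p^k)|\le 3p^{k/2}$.
\item If $p\mid b$ and $k\ge2$, the terms with $p\nmid r$ cancel, because $p\nmid 4ar^3+b$. Substituting $r=ps$ then shows:
  \begin{itemize}
  \item for $2\le k\le4$, $S(a,b;p^k)=p^{k-1}$ if $p^{k-1}\mid b$, and $0$ otherwise;
  \item for $k>4$, $S(a,b;p^k)=0$ if $p^3\nmid b$, and $S(a,b;p^k)=p^3S(a,b/p^3;p^{k-4})$ if $p^3\mid b$.
  \end{itemize}
\end{itemize}
Induction, with Weil's bound for $k=1$, gives $|S(a,b;p^k)|\le 3p^{k/2}(b,p^k)^{1/2}$. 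The primes $2$ and $3$ need a separate finite check but only affect the constant. Multiplicativity then yields the displayed bound for $S(a,b;q)$. Finally,
\[
\sum_{1\le|b|\le q/2}\frac{(b,q)^{1/2}}{|b|}\le 2\sum_{d\mid q}d^{1/2}\sum_{\substack{1\le b\le q\\ d\mid b}}\frac1b\ll \log q\sum_{d\mid q}d^{-1/2}\ll q^{\varepsilon},
\]
which restores the error term $O(q^{1/2+\varepsilon})$.
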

 
\begin{lemma}\label{exposumm2} {\rm \cite{RakhmonovAzamovMirzo2010}.}
Let $\tau\geq 24x^2y$ and $\frac{1}{8q x^3}<|\lambda|\le
 \frac{1}{q\tau}$. Then
$$
T(\alpha,x,y)\ll q^{\frac{3}{4}}\ln q+q^{\frac{1}{4}}x^{\frac{1}{2}}.
$$
\end{lemma}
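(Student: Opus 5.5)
The plan is to complete the sum modulo $q$ and then estimate the resulting oscillatory integrals, sorting the dual frequencies by whether the phase has a stationary point in $(x-y,x]$.

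\textbf{Completion.} Split $n=qm+r$ with $1\le r\le q$. On each residue class, $\alpha n^4\equiv ar^4/q+\lambda n^4 \pmod 1$. Apply the truncated Poisson formula (van der Corput's lemma in the form given in \cite{KarOATCh}) to each inner sum over $m$, and then recombine over $r$. This gives
\[
T(\alpha;x,y)=\frac1q\sum_{|k|\le K}S(a,k,q)\,I_k+O\Bigl(q^{3/4}\ln q\Bigr),
\qquad
S(a,k,q)=\sum_{r=1}^{q}e\Bigl(\frac{ar^4+kr}{q}\Bigr),
\qquad
I_k=\int_{x-y}^{x}e\Bigl(\lambda t^4-\frac{kt}{q}\Bigr)dt .
\]
Here $K$ is of order $q$ times the range of $f'(t)=4\lambda t^3$ on the interval, plus $q$. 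The error term in the truncation must be kept at the size $q^{3/4}\ln q$. To do this, the truncation error is not estimated trivially residue by residue. Instead, the Fourier coefficients $\min(1,1/\|\cdot\|)$ are summed against the complete sums, and we use the uniform bound $|S(a,k,q)|\ll q^{3/4}$ for $(a,q)=1$, which is the standard Hua-type bound for complete quartic sums.

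\textbf{Range of the phase derivative.} The hypotheses give
\[
0<4|\lambda|t^3\le \frac{4x^3}{q\tau}\le\frac{x}{6qy}.
\]
On the interval $(x-y,x]$ the derivative $f'(t)$ varies by at most
\[
12|\lambda|x^2y\le \frac{12x^2y}{q\tau}\le\frac1{2q}.
\]
Hence at most one or two integers $k$, say those with $|k-k_0|\le 1$ where $k_0=4\lambda q x^3$, can make $\lambda t^4-kt/q$ stationary in the interval.

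\textbf{Stationary frequencies.} For these $O(1)$ values of $k$, apply the second-derivative test to $I_k$. Since $|f''(t)|=12|\lambda|t^2\asymp|\lambda|x^2$ for $y\le0.01x$, we get
\[
I_k\ll(|\lambda|x^2)^{-1/2}.
\]
The lower bound $|\lambda|>1/(8qx^3)$ gives $(|\lambda|x^2)^{-1/2}<(8qx)^{1/2}$. The total contribution is therefore
\[
\ll \frac1q\,q^{3/4}(qx)^{1/2}=q^{1/4}x^{1/2},
\]
which is the second term of the lemma. This is the only place the lower restriction on $|\lambda|$ is used.

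\textbf{Non-stationary frequencies.} For the remaining $k$, the first-derivative test gives $I_k\ll q/|k-k_0|$, since $|k/q-f'(t)|\gg|k-k_0|/q$ throughout the interval. Summing $\frac1q\,q^{3/4}\cdot q/|k-k_0|$ over $1<|k-k_0|\le K$ with $K\ll q$ gives $\ll q^{3/4}\ln q$.

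\textbf{Main obstacle.} I expect the main difficulty to be the bookkeeping in the truncated Poisson step: choosing $K$ and the form of the remainder so that neither the tail in $k$ nor the boundary terms exceed $q^{3/4}\ln q$. The remainder must be summed against the complete sums $S(a,k,q)$ rather than bounded separately in each residue class, since a class-by-class bound would lose a factor of $q^{1/4}$. The condition $\tau\ge24x^2y$ is exactly what keeps the stationary set to $O(1)$ frequencies, and it also makes the non-stationary harmonic sum start at distance $\gg1$ from $k_0$.
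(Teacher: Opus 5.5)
\textbf{The stationary-phase part is correct; the completion step has a gap.}

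Your analytic core is correct and matches the paper's machinery. The variation of $f'$ over $[x-y,x]$ is at most $12|\lambda|x^2y\le 1/(2q)$, so only $O(1)$ frequencies are (nearly) stationary. The second-derivative test with $|\lambda|>1/(8qx^3)$ gives $q^{-1/4}(|\lambda|x^2)^{-1/2}\ll q^{1/4}x^{1/2}$, and the first-derivative test gives the harmonic sum $q^{3/4}\ln q$. The paper does not prove this lemma; it only cites it. Its proof of Theorem~\ref{thm:quartic-sum}, however, runs exactly this argument, and in the present range $|\mathcal M_0|\ll 1+q|\lambda|x^2y\ll 1$.

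The gap is the completion step. Suppose you split $n=qv+r$ and apply Lemma~\ref{puasson} in each residue class. Then $g_r'(v)=4q\lambda(qv+r)^3$ varies by at most $1/2$, so the lemma needs only the frequencies $|k-k_0|\le 1$. The window $1<|k-k_0|\ll q$ that you go on to treat is not produced by this decomposition, and widening the truncation does not shrink the remainder. Each class leaves a remainder that is at best $O(1)$, and the lemma gives no control of how it depends on $r$. Summed over $r$ this is $O(q)$, which exceeds $q^{3/4}\ln q+q^{1/4}x^{1/2}$ once $q$ is larger than about $x^{2/3}$. Lemma~\ref{puasson} is a black box, so there is nothing to sum against $S(a,k,q)$. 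Your one-sentence remedy would require reopening the proof of the Poisson formula: the sawtooth boundary terms at $(x-r)/q$ and $(x-y-r)/q$ would have to be expanded in additive characters modulo $q$. You do not carry this out.

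\textbf{The fix: reverse the order, as the paper does.} First expand
$e(an^4/q)=q^{-1}\sum_{b=1}^{q}S_b(a,q)e(-bn/q)$, then apply Lemma~\ref{puasson} to each $T_b=\sum_{x-y<n\le x}e(\lambda n^4-bn/q)$.

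\begin{itemize}
\item Each remainder $O(\ln(H_b+2))$ then comes already multiplied by $S_b(a,q)/q\ll q^{-1/4}$ (Lemma~\ref{orsumm}). The total is $q^{3/4}\ln(2+|\lambda|x^3)$ with no further work.
\item The dual variable $m=b+qh$ runs over a window of length $O(q)$ around $\Delta$. This is exactly the set your stationary/non-stationary analysis handles.
\item One more remark is needed, since this remainder has $\ln(2+|\lambda|x^3)$ rather than $\ln q$. Here $|\lambda|x^3\le x/(24qy)\le x/q$ (one may assume $y\ge 1$), so $q^{3/4}\ln(2+|\lambda|x^3)\ll q^{3/4}+q^{1/4}x^{1/2}$, which lies within the claimed bound.
\item Alternatively, since $f_b'$ varies by less than $1/(2q)$, you can subtract a suitable integer multiple of $u$ from $f_b$ to make $H_b\ll 1$.
\end{itemize}

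With this reordering your argument proves the lemma.
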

\begin{lemma}\label{pervproiz} {\rm \cite{Archipov}.}
Let \(f\) be a real-valued function on the interval \([a,b]\), with
\(f'(u)\) monotonic and $|f'(u)|\geq m>0$ for $a\leq u\leq b$.
Let \(g\) also be a monotonic function on \([a,b]\) such that
$|g(u)|\leq M.$
Then
\[
\int\limits_{a}^{b} g(u)e(f(u))\,du \ll \frac{M}{m}.
\]
\end{lemma}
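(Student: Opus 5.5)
The plan is the classical first-derivative test. Assume first that $g$ is real-valued; if $g$ is complex, the monotonicity hypothesis forces it to be real anyway, or else one treats real and imaginary parts separately. Without loss of generality $f'(u)\ge m>0$ on $[a,b]$: since $f'$ is monotonic and never vanishes, it has constant sign, and the case $f'\le -m$ follows by complex conjugation. Then $f$ is strictly increasing, with $f'$ monotonic and bounded below by $m$.

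The first step is to rewrite the integrand. Set
\[
\phi(u)=\frac{g(u)}{2\pi f'(u)},
\]
so that $g(u)e(f(u))=\phi(u)\,\frac{d}{du}\bigl(\sin 2\pi f(u)-i\cos 2\pi f(u)\bigr)$. It then suffices to bound the integrals
\[
\int_a^b \phi(u)\,d\bigl(\cos 2\pi f(u)\bigr),\qquad \int_a^b \phi(u)\,d\bigl(\sin 2\pi f(u)\bigr).
\]

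To avoid differentiating $g$ (which is only monotonic), the second step is to apply the second mean value theorem (Bonnet's form) rather than naive integration by parts. The obstacle is that $\phi=g\cdot(1/f')$ is a product of two monotonic functions, hence not necessarily monotonic itself. The remedy is to split $g=g_+-g_-$ with $g_\pm$ monotonic and of constant sign. For instance, if $g$ is increasing, either $g$ has constant sign or it changes sign once at a point $c$, and we split $[a,b]$ at $c$. On each piece $g$ is monotonic and of constant sign, and $1/f'$ is positive and monotonic.

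The remaining case is when the two monotonicities are opposite, so that $\phi$ is not monotonic. Here, instead of applying Bonnet directly, I will use the following iterated argument. Write
\[
\int g\,e(f)=\int g\cdot\frac{1}{f'}\cdot f'e(f).
\]
By Bonnet applied to the monotonic factor $g$ (after subtracting the constant $g(a)$ or $g(b)$, which has modulus at most $M$), the integral reduces to a bound of the form
\[
2M\sup_{[\xi,\eta]}\Bigl|\int_\xi^\eta e(f(u))\,du\Bigr|.
\]
This inner integral, $\int_\xi^\eta \frac{1}{f'}\,d\bigl(e(f)\bigr)/(2\pi i)$, is handled by Bonnet again, applied to the real and imaginary parts. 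Since $1/f'$ is monotonic and lies in $(0,1/m]$, each part is at most $2/(\pi m)$ in absolute value. Combining the steps gives $\ll M/m$.

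The main point needing care is precisely this two-stage use of the second mean value theorem, which handles the fact that $g/f'$ need not be monotonic.
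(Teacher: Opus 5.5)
The paper gives no proof of this lemma; it is quoted from the literature. Your proposal is correct, and its final two-stage argument is the standard proof of the cited result.

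That argument has two steps. First, you show $\bigl|\int_\xi^\eta e(f(u))\,du\bigr|\ll 1/m$ uniformly for $a\le\xi<\eta\le b$. This follows from the second mean value theorem applied to real and imaginary parts, with the positive monotone factor $1/(2\pi f')\le 1/(2\pi m)$. Second, you remove $g$ by a further application of the mean value theorem to the monotone factor $g$, which gives $\ll M/m$.

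Two remarks:
\begin{itemize}
\item The detour through $\phi=g/(2\pi f')$ and the splitting $g=g_+-g_-$ into ``same'' and ``opposite'' monotonicity cases is unnecessary. The iterated argument works for every monotone $g$ directly, so you can drop the case distinction entirely.
\item Your claim that $f'$ has constant sign needs more than ``monotonic and never vanishes'', since a monotone function can jump over $0$. It holds because $f'$ is a derivative and so has the intermediate value property (Darboux). Hence a monotone derivative is continuous, and a continuous function with $|f'|\ge m$ cannot change sign. This is what justifies your reduction to $f'\ge m$ by complex conjugation.
\end{itemize}
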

\begin{lemma}\label{vtorproiz} {\rm \cite{Archipov}.}
Let \(f\) be a real-valued function twice differentiable on the
interval \([a,b]\). If, for all \(u\in(a,b)\),
\[
|f''(u)|\geq \lambda>0,
\]
then
\[
\int_a^b e(f(u))\,du \ll \lambda^{-\frac{1}{2}}.
\]
\end{lemma}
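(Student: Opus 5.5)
We would deduce the statement from the first-derivative estimate of Lemma~\ref{pervproiz}, applied with $g\equiv 1$ and $M=1$. The strategy is the standard one: cut out a short neighbourhood of the possible stationary point of $f$, bound the integral over it trivially, and treat the rest with Lemma~\ref{pervproiz}.

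\emph{Step 1: monotonicity of $f'$.} Since $f$ is twice differentiable, $f''$ is a derivative on $(a,b)$ and so has the Darboux (intermediate value) property. Because $|f''(u)|\ge\lambda>0$, $f''$ never vanishes on $(a,b)$, hence it has constant sign there. Replacing $f$ by $-f$ if necessary (this only conjugates the integral), we may assume $f''\ge\lambda$. Then $f'$ is strictly increasing on $[a,b]$, and by the mean value theorem
\[
f'(v)-f'(u)\ge\lambda(v-u)\qquad (a\le u<v\le b).
\]

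\emph{Step 2: splitting the interval.} Fix a parameter $\delta>0$. Because $f'$ is increasing, the set $I_0=\{u\in[a,b]:|f'(u)|<\delta\}$ is an interval. By Step~1 its length is at most $2\delta/\lambda$, so its contribution to the integral is at most $2\delta/\lambda$. The complement $[a,b]\setminus I_0$ consists of at most two intervals. On the first $f'\le-\delta$ and on the second $f'\ge\delta$, and on each of them $f'$ is monotone with $|f'|\ge\delta$. Lemma~\ref{pervproiz} with $m=\delta$ therefore bounds each of these pieces by $O(1/\delta)$.

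\emph{Step 3: optimisation.} Combining the pieces gives
\[
\int_a^b e(f(u))\,du\ll \frac{\delta}{\lambda}+\frac{1}{\delta}.
\]
Choosing $\delta=\lambda^{1/2}$ yields the bound $\ll\lambda^{-1/2}$, uniformly in $a$, $b$ and $f$.

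The only delicate point is Step~1. There we use the Darboux property to get a constant sign of $f''$ without assuming $f''$ is continuous, together with the endpoint issue that the hypothesis holds only on the open interval $(a,b)$. The latter is harmless: the mean value theorem only needs $f''$ at interior points, and monotonicity of $f'$ extends to $[a,b]$ because $f'$ is continuous. Degenerate cases, such as $I_0$ being empty or reaching an endpoint, simply remove some of the pieces and only improve the bound.
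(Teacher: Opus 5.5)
Your proof is correct: it derives the second-derivative bound from Lemma~\ref{pervproiz} by removing the interval where $|f'|<\delta$, bounding that piece trivially by $2\delta/\lambda$, and choosing $\delta=\lambda^{1/2}$, and your Darboux argument properly justifies the constant sign of $f''$ under the two-sided hypothesis $|f''|\ge\lambda$. The paper itself gives no proof of this lemma and only quotes it from the literature, where it is proved by this same argument.
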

\begin{lemma}\label{orsumm} {\rm \cite{Archipov}.}
Let $n \geq 3$ be an integer, and let
\[
    f(x)=a_nx^n+\cdots+a_1x+a_0
\]
be a polynomial with integer coefficients. Let $q$ also be a natural number and
\[
    (a_n,\ldots,a_1,q)=1.
\]
Then
\[
    |\sum\limits_{x=1}^{q}e\Big(\frac{f(x)}{q}\Big)| \leq c(n) q^{1-\frac{1}{n}},
\]
where
\[
    c(n)=
    \begin{cases}
        e^{4n}, & n\geq 10,\\[2mm]
        e^{nA(n)}, & 3\leq n\leq 9,
    \end{cases}
\]
and
\[
\begin{aligned}
    A(3)&=6.1, & A(4)&=5.5, & A(5)&=5, & A(6)&=4.7,\\
    A(7)&=4.4, & A(8)&=4.2, & A(9)&=4.05.
\end{aligned}
\]
\end{lemma}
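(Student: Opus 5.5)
Because the statement is a known result (Hua's bound in the explicit form due to Stechkin), I would prove it by multiplicative reduction to prime powers, a $p$-adic recursion on the exponent, and a bookkeeping step that turns all prime-power constants into the single constant $c(n)$.

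\emph{Reduction to prime powers.} Write $S(f,q)=\sum_{x=1}^{q}e(f(x)/q)$. If $q=q_1q_2$ with $(q_1,q_2)=1$, set $x=q_2x_1+q_1x_2$, where $x_i$ runs modulo $q_i$. Then
\[
S(f,q)=S(f_1,q_1)\,S(f_2,q_2),\qquad f_1(x_1)=q_2^{-1}f(q_2x_1),\quad f_2(x_2)=q_1^{-1}f(q_1x_2).
\]
Here the inverses are taken modulo $q_1$, resp.\ $q_2$, and $f_1$, $f_2$ are read modulo $q_1$, resp.\ $q_2$. Since $q_2$ is invertible modulo $q_1$, the polynomial $f_1$ again satisfies the coprimality condition $(a_n,\ldots,a_1,q_1)=1$ and has degree at most $n$; the same holds for $f_2$. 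Hence it suffices to prove
\[
|S(f,p^k)|\le C_p\,p^{k(1-1/n)}
\]
with $C_p=1$ for all $p>p_0(n)$, and with $\prod_{p\le p_0(n)}C_p\le c(n)$.

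\emph{Prime powers.} For $k=1$ I would use Weil's bound $|S(f,p)|\le (n-1)p^{1/2}$. This is at most $p^{1-1/n}$ as soon as $p^{1/2-1/n}\ge n-1$, so the remaining primes are finitely many and are handled by the trivial bound $p\le p^{1-1/n}\cdot p^{1/n}$. For $k\ge2$ I would run Hua's recursion. Write $x=u+p^{k-1}v$ with $u$ modulo $p^{k-1}$ and $v$ modulo $p$. Since $2(k-1)\ge k$, we have $f(u+p^{k-1}v)\equiv f(u)+p^{k-1}vf'(u)\pmod{p^k}$. Summing over $v$ kills every $u$ with $p\nmid f'(u)$, so only the $u$ congruent modulo $p$ to roots $\xi$ of $f'$ modulo $p$ survive. (The degenerate case where $f'\equiv0\pmod p$ identically occurs only for $p\le n$ and is absorbed into the small-prime constants.)

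For a root $\xi$ of multiplicity $m$, substitute $x=\xi+py$ and write $f(\xi+py)-f(\xi)=p^{\sigma}g(y)$. Here $\sigma\le m+1$ for $p>n$, and $g$ again has coprime coefficients modulo $p$. This gives
\[
|S_\xi|\le p^{\sigma-1}\,|S(g,p^{k-\sigma})|\quad\text{for }\sigma<k,\qquad |S_\xi|\le p^{k-1}\quad\text{for }\sigma\ge k.
\]
Iterating, with the number of roots bounded by $\sum m\le n-1$, and inducting on $k$ yields $|S(f,p^k)|\le (n-1)\,p^{k(1-1/n)}$ in general. A sharper count, using $\sigma\le m+1\le n$ and the saving in the exponent at each step, gives constant $1$ for all but finitely many $p$.

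\emph{The main obstacle: the explicit constant.} The qualitative estimate $\ll_n q^{1-1/n}$ follows from the two steps above. Obtaining the explicit values $c(n)=e^{4n}$ for $n\ge10$ and $e^{nA(n)}$ with the tabulated $A(n)$ for $3\le n\le9$ requires careful optimization. One has to estimate $\prod_{p\le p_0(n)}\max_k C_{p,k}$, using $C_p\le\min\bigl(p^{1/n},(n-1)p^{1/2-1/n}\bigr)$ at $k=1$ and the refined recursion at higher $k$, and then bound the product through Chebyshev-type estimates for $\sum_{p\le P}\log p$. I would carry out this optimization numerically for $n\le9$ and by a uniform prime-counting estimate for $n\ge10$, following Stechkin's treatment. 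In the paper itself the lemma is simply quoted from \cite{Archipov}, so only the structure above is needed.
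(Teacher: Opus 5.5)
The paper does not prove this lemma. It quotes it from the cited monograph and uses it only once, in the proof of Theorem~\ref{thm:quartic-sum}, for $n=4$ and $f(v)=av^4+mv$, where only the qualitative bound $|S_m(a,q)|\ll q^{3/4}$ is needed. So there is no proof in the paper to compare with. Your route is the standard one, and the CRT factorization and the descent inequality $|S_\xi|\le p^{\sigma-1}|S(g,p^{k-\sigma})|$ with $\sigma\le m+1$ are correct.

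\textbf{First gap: the induction does not close as written.} The step ``iterating, with the number of roots bounded by $\sum m\le n-1$, and inducting on $k$'' fails. If the inductive hypothesis is $|S(g,p^{k'})|\le C\,p^{k'(1-1/n)}$ for arbitrary admissible $g$, the descent only gives
\[
|S(f,p^k)|\le C\,p^{k(1-1/n)}\sum_{\xi}p^{\sigma_\xi/n-1},
\]
and this sum can exceed $1$. For example, take $n=4$, $p=5$, $f(x)=x^4+2x^3+2x^2+x$: then $f'$ has the three simple roots $1,2,3$ modulo $5$, each with $\sigma_\xi=2$, so the factor is $3\cdot 5^{-1/2}>1$. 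The constant you can derive therefore grows geometrically in $k$. For $p>\max\bigl(n,(n-1)^{n/(n-2)}\bigr)$ the sum is $\le 1$, so your claim $C_p=1$ for large $p$ is fine.

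The missing idea is Hua's multiplicity bookkeeping. Prove $|S_\xi(f,p^k)|\le m\,p^{k(1-1/(m+1))}$ for a root $\xi$ of multiplicity $m$ by induction on $k$. The key fact is that $g \bmod p$ has degree at most $\sigma\le m+1$, so for $p>n$ its derivative $g'$ has at most $\sigma-1\le m$ roots counted with multiplicity. Thus the multiplicity budget never increases along the descent; Weil's bound is needed at the level $k-\sigma=1$. Summing over $\xi$ then gives $(n-1)p^{k(1-1/n)}$.

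Likewise, the case $f'\equiv 0\pmod p$ cannot simply be ``absorbed into the small-prime constants''. The trivial bound $p^k=p^{k/n}\cdot p^{k(1-1/n)}$ is not uniform in $k$. Moreover, for $p\le n$ this degeneracy can reappear further down the descent: for $p=2$, $f=x^4+x^3+x^2$, $\xi=0$ one gets $g\equiv y^2\pmod 2$. You need the usual modification: let $p^\tau$ be the exact power of $p$ dividing all coefficients of $f'$ (so $p^\tau\le n$), and split $x=u+p^{k-\tau-1}v$ for $k\ge 2\tau+2$.

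\textbf{Second gap: the explicit constants are not derived.} The lemma as stated is precisely a claim about $c(n)=e^{4n}$ and $c(n)=e^{nA(n)}$; the bound with some unspecified $c(n)$ is the classical Hua-type estimate. Your proposal defers these values to an optimization ``following Stechkin'' that is never carried out. To obtain them you would have to:
\begin{itemize}
\item write down an explicit $k$-uniform $C_p$ for every prime, for instance $C_p=\max\bigl(1,\min(p^{1/n},(n-1)p^{1/n-1/2})\bigr)$ where the full-sum induction closes;
\item use Hua's constant $n-1$ for $n<p\le (n-1)^{n/(n-2)}$;
\item account for the $p^\tau$-losses for $p\le n$;
\item check that $\sum_p\log C_p\le 4n$ for $n\ge 10$ and $\le nA(n)$ for each $3\le n\le 9$.
\end{itemize}
Without this, your outline (after the repairs above) proves only $|S|\le c(n)q^{1-1/n}$ for some $c(n)$. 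That is enough for the paper's single application, but it is not the statement with the tabulated constants.
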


\begin{lemma}\label{puasson} {\rm \cite{Von}.}
Let \(f\) be a real-valued function on \([a,b]\) such that
\(f'(u)\) is monotonic. If, for every $u\in[a,b]$ and some integers \(H_1,H_2\), one has $H_1\leq f'(u)\leq H_2,$
then
\[
\sum_{a<n\leq b} e(f(n))
=
\sum_{H_1\leq h\leq H_2}
\int_a^b e(f(u)-hu)\,du
+
O(\log(H+2)),
\]
where \(H=\max(|H_1|,|H_2|)\).
\end{lemma}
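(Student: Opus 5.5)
The plan is to prove this truncated Poisson formula directly, following the classical van der Corput argument (Titchmarsh, Lemma 4.7; Vaughan, Lemma 4.2). First express the sum through the sawtooth function $\psi(u)=u-\lfloor u\rfloor-\tfrac12$ as a Stieltjes integral:
\[
\sum_{a<n\le b}e(f(n))=\int_a^b e(f(u))\,d\bigl(u-\psi(u)\bigr)
=\int_a^b e(f(u))\,du-\bigl[e(f(u))\psi(u)\bigr]_a^b+\int_a^b\psi(u)\,d\,e(f(u)).
\]
The boundary term is $O(1)$. In the last integral, replace $\psi$ by its truncated Fourier series
\[
\psi(u)=-\sum_{0<|h|\le K}\frac{e(hu)}{2\pi i h}+O\Bigl(\min\bigl(1,\tfrac{1}{K\|u\|}\bigr)\Bigr).
\]
Here $d\,e(f(u))=2\pi i f'(u)e(f(u))\,du$ with $f'$ bounded on $[a,b]$, so the error term contributes $O(\max|f'|\,(b-a)K^{-1}\log K)$, which tends to $0$ as $K\to\infty$.

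For each main term, write $f'(u)=(f'(u)+h)-h$. The piece with $f'+h$ is an exact derivative, which produces the boundary sums
\[
\sum_{0<|h|\le K}\frac{e(hb)}{2\pi i h}\,e(f(b))
\]
and the analogous sum at $a$. These are partial sums of the Fourier series of $\psi$, hence uniformly $O(1)$. The remaining piece is $\sum_{0<|h|\le K}\int_a^b e(f(u)+hu)\,du$. After the change $h\mapsto -h$, and adding the integral $\int_a^b e(f)\,du$ as the $h=0$ term, we get
\[
\sum_{a<n\le b}e(f(n))=\sum_{|h|\le K}\int_a^b e(f(u)-hu)\,du+O(1)+o_K(1).
\]

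It remains to remove the terms with $h\notin[H_1,H_2]$ at total cost $O(\log(H+2))$, uniformly in $K$, and then let $K\to\infty$. Take $h>H_2$; the case $h<H_1$ is symmetric. Then $h-f'(u)\ge h-H_2>0$, and this quantity is monotonic in $u$. Integrating by parts gives
\[
\int_a^b e(f(u)-hu)\,du=\Bigl[\frac{e(f(u)-hu)}{2\pi i(f'(u)-h)}\Bigr]_a^b-\int_a^b e(f(u)-hu)\,d\Bigl(\frac{1}{2\pi i(f'(u)-h)}\Bigr).
\]
The last integral is bounded by the total variation of the monotone function $1/(h-f'(u))$. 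That variation is at most
\[
\frac{1}{h-f'(b)}-\frac{1}{h-f'(a)}\le\frac{H_2-H_1}{(h-H_2)(h-H_1)}
\qquad\text{(up to order)},
\]
and summing over $h>H_2$ gives $O(\log(H+2))$. To see this, split at $h-H_2\approx H$: near $H_2$ use the bound $1/(h-H_2)$, which sums to $O(\log(H+2))$; beyond that use $H/(h-H_2)^2$, which sums to $O(1)$.

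The main obstacle is the boundary terms $\sum_{h>H_2}e(-hb)/(f'(b)-h)$, and likewise at $a$, because the absolute values are not summable. For $H_2<h\le H_2+2H+2$, bound each term trivially by $1/(h-H_2)$ (using $h-f'(b)\ge h-H_2$), which contributes $O(\log(H+2))$. For larger $h$, write
\[
\frac{1}{h-f'(b)}=\frac1h+O\Bigl(\frac{H}{h^2}\Bigr).
\]
The $\sum e(-hb)/h$ over any range has $O(1)$ partial sums, since it is again a Fourier partial sum of $\psi$. The remainder $\sum_{h>2H}H/h^2$ is $O(1)$. All these bounds are uniform in $K$, so letting $K\to\infty$ yields the lemma.
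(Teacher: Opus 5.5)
You have a genuine gap in the last step. (The paper gives no proof of this lemma; it quotes it from Vaughan.) Up to the final step your argument is the standard van der Corput argument and is correct. In particular, the variation bound $\frac{H_2-H_1}{(h-H_2)(h-H_1)}$ holds exactly, and its sum over $h>H_2$ telescopes to $\sum_{j\le H_2-H_1}j^{-1}\ll\log(H+2)$.

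\textbf{The gap.} It lies in the boundary terms for large $h$. You claim that $\sum e(-hb)/h$ ``over any range has $O(1)$ partial sums, since it is again a Fourier partial sum of $\psi$''. This is false. Only the symmetric sum $\sum_{0<|h|\le K}e(hb)/(2\pi i h)=\sum_{h\le K}\sin(2\pi hb)/(\pi h)$ is a partial sum of the series for $\psi$. A one-sided sum also contains $\sum\cos(2\pi hb)/h$. Over $M<h\le K$ this equals $\log(K/M)+O(1)$ when $b\in\mathbb{Z}$, and it is unbounded near integers.

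In fact the one-sided tail $\sum_{H_2<h\le K}\int_a^b e(f(u)-hu)\,du$ is not bounded uniformly in $K$ at all. Take $f\equiv0$, $a=0$, $b=\tfrac12$, so $H=0$. Then $\int_0^{1/2}e(-hu)\,du=(1-(-1)^h)/(2\pi i h)$, and the tail equals $(2\pi i)^{-1}\log K+O(1)$. So the tails $h>H_2$ and $h<H_1$ cannot be estimated separately. Your remark ``the case $h<H_1$ is symmetric'' hides exactly the cancellation that is needed.

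\textbf{The repair.} Treat the two tails together. Put $M=3H+3$. Bound the boundary terms trivially for $H_2<h\le M$ and for $-M\le h<H_1$; this costs $O(\log(H+2))$. For $M<h\le K$, pair $h$ with $-h$. Since $|f'(t)|\le H$,
\[
\frac{e(-ht)}{f'(t)-h}+\frac{e(ht)}{f'(t)+h}=\frac{2i\sin(2\pi ht)}{h}+O\Bigl(\frac{H}{h^{2}}\Bigr),\qquad t\in\{a,b\}.
\]
The sum $\sum_{M<h\le K}\sin(2\pi ht)/h$ is bounded uniformly in $M$, $K$ and $t$, and $\sum_{h>M}H/h^2\ll1$. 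With this pairing, the rest of your argument goes through unchanged, and the limit $K\to\infty$ is then legitimate.

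An alternative fix is to split $f'=(f'+h)-h$ only for $-h\in[H_1,H_2]$; the resulting boundary terms are trivially $O(\log(H+2))$. For the other $h$, integrate $h^{-1}\int_a^b f'(u)e(f(u)+hu)\,du$ by parts against the monotone function $f'/(f'+h)$. This gives bounds that are summable to $O(\log(H+2))$.
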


\begin{lemma}\label{Veyl} {\rm \cite{Hua}.} Let $x\ge x_0>0$,
$\sqrt{x}<y\le 0.01x$, let $\alpha$ be a real number, and suppose that
$|\alpha-\frac{a}{q}|\le \frac{1}{q^2}$, $(a,q)=1$. Then
$$
|T(\alpha;x,y)|\le
6y^{1+\varepsilon}\left(\frac{1}{y}+\frac{1}{q}+\frac{q}{y^4}\right)^{\frac{1}{8}}.
$$
\end{lemma}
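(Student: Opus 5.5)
The plan is Weyl's method: difference the sum three times to reach a linear phase, then count via the divisor bound and Vinogradov's standard estimate for sums of $\min(y,\|\cdot\|^{-1})$. This is the classical argument of \cite{Hua}, adapted to an interval of length $y$.

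\textbf{Reductions.} First dispose of trivial ranges. If $q>y^4$, the right-hand side is at least $6y^{1+\varepsilon}(q/y^4)^{1/8}\ge 6y$. Since $|T(\alpha;x,y)|\le y+1$, the claim holds. So we assume $q\le y^4$, hence $\log q\le 4\log y$. Because $y>\sqrt{x}\ge\sqrt{x_0}$ is large, any factor $\log y$ is absorbed into $y^{\varepsilon}$.

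Next substitute $n=x-y+m$ with $0<m\le y$ (up to an endpoint adjustment). Then $\alpha n^4=f(m)$, a quartic in $m$ with leading coefficient $\alpha$ and real lower coefficients. The sum becomes $\sum_{m\in I}e(f(m))$ over an interval $I$ of length at most $y$.

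\textbf{Weyl differencing.} Apply Weyl's inequality (Cauchy--Schwarz plus shifting) three times. This gives
\[
|T|^{8}\le (2y)^{4}\sum_{|h_1|,|h_2|,|h_3|<y}\Bigl|\sum_{m\in I(h_1,h_2,h_3)}e\bigl(\Delta_{h_1,h_2,h_3}f(m)\bigr)\Bigr|,
\]
where each $I(\cdot)\subset I$ is an interval. The third difference of a quartic is linear in $m$: $\Delta_{h_1,h_2,h_3}f(m)=24\alpha h_1h_2h_3\,m+c(h)$. The lower coefficients of $f$ contribute only to the constant $c(h)$. Hence the inner sum is at most $\min\bigl(y,\tfrac{1}{2\|24\alpha h_1h_2h_3\|}\bigr)$.

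\textbf{Counting.} Split the triples $(h_1,h_2,h_3)$ into two classes.
\begin{itemize}
\item If $h_1h_2h_3=0$, there are $O(y^2)$ triples, each contributing at most $y$. Their total is $O(y^3)$, giving $y^4\cdot y^3=y^8\cdot y^{-1}$. This produces the term $1/y$.
\item Otherwise set $r=24h_1h_2h_3$, so $1\le|r|\le 24y^3$. Each $r$ arises from at most $\ll d_3(|r|)\ll y^{\varepsilon}$ triples.
\end{itemize}
For the second class we need $\sum_{1\le r\le 24y^3}\min(y,\|\alpha r\|^{-1})$. By the standard lemma, valid since $|\alpha-a/q|\le q^{-2}$ and $(a,q)=1$, this is
\[
\ll \Bigl(\frac{y^3}{q}+1\Bigr)(y+q\log q).
\]
Multiplying by $y^4\cdot y^{\varepsilon}$ yields
\[
|T|^8\ll y^{8+\varepsilon}\Bigl(\frac1q+\frac{\log q}{y}+\frac1{y^3}+\frac{q\log q}{y^4}\Bigr)+y^7 .
\]
Absorbing the logarithms and taking eighth roots gives the stated shape $y^{1+\varepsilon}(1/y+1/q+q/y^4)^{1/8}$.

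\textbf{Main obstacle.} The delicate part is the explicit constant $6$ rather than an implied constant. To get it, I would track all constants in the differencing step, where $(2y)^4$ enters. I would also use an explicit form of the divisor bound $d_3(r)\le C_\delta r^{\delta}$ and of the reciprocal-sum lemma. Any surplus constants would be absorbed by spending a tiny power $y^{\varepsilon/2}$, which is legitimate because $y$ is large. If this bookkeeping does not close, I would fall back on the standard practice of quoting \cite{Hua} for the constant, after verifying the reduction above.
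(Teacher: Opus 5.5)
Your proposal is correct and is essentially the argument behind the citation: the paper proves nothing here and simply quotes \cite{Hua}, whose proof is exactly this Weyl differencing applied $k-1=3$ times, followed by the divisor bound and the standard estimate for $\sum_{r\le X}\min(y,\|\alpha r\|^{-1})$. The constant $6$ is not a real obstacle: since $y>\sqrt{x}\ge\sqrt{x_0}$ with $x_0=x_0(\varepsilon)$, spending part of $y^{\varepsilon}$ absorbs every implied constant and logarithm, so your argument closes without the fallback to \cite{Hua}.
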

\section{The intermediate-range estimate for short quartic Weyl sums}
\begin{proof}[Proof of Theorem~\ref{thm:quartic-sum}]
We have
\[
T\left(\frac{a}{q}+\lambda;x,y\right)
=
\sum_{x-y<n\le x}
e\left(\frac{an^4}{q}+\lambda n^4\right).
\]
We use the expansion
\[
e\left(\frac{an^4}{q}\right)
=
\frac{1}{q}\sum_{b=1}^{q}
S_b(a,q)e\left(-\frac{bn}{q}\right),
\]
where
\[
S_b(a,q)=
\sum_{v=1}^{q}
e\left(\frac{av^4+bv}{q}\right).
\]
Then
\[
T\left(\frac{a}{q}+\lambda;x,y\right)
=
\frac{1}{q}
\sum_{b=1}^{q}
S_b(a,q)T_b,
\]
where
\[
T_b=
\sum_{x-y<n\le x}
e\left(\lambda n^4-\frac{b}{q}n\right).
\]

Put
\[
f_b(u)=\lambda u^4-\frac{b}{q}u.
\]
Then
\[
f_b'(u)=4\lambda u^3-\frac{b}{q}.
\]
Since \(y\le 0.01x\), on the interval \(x-y\le u\le x\)
we have \(u\asymp x\), and the function \(f_b'(u)\) is monotonic; therefore,
by Lemma \ref{puasson},
\begin{equation}
\label{osnlemma01}
T_b
=
\sum_{h=H_{1b}}^{H_{2b}}
\int_{x-y}^{x}
e\left(\lambda u^4-\frac{b}{q}u-hu\right)\,du
+
O\bigl(\ln(H_b+2)\bigr),
\end{equation}
where $H_{1b}\le f_b'(u)\le H_{2b}$ for all $u\in[x-y,x]$, and
\[
H_b=\max(|H_{1b}|,|H_{2b}|).
\]

Since, for all $u\in[x-y,x]$,
\[
|f_b'(u)|
\le
4|\lambda|x^3+\frac{b}{q},
\]
taking into account $b\le q$ and choosing $H_{1b}, H_{2b}$ to be integers, we obtain
\[
H_b\ll 1+|\lambda|x^3.
\]

In (\ref{osnlemma01}), after the substitution $m=b+qh$, we have
$$
\frac {b}{q}+h=\frac{m}{q},
$$
and the corresponding integral has the form
\[
I_m=
\int_{x-y}^{x}
e\left(\lambda u^4-\frac{m}{q}u\right)\,du.
\]
Since \(m\equiv b \pmod{q}\),
\[
S_b(a,q)=S_m(a,q),
\]
where
\[
S_m(a,q)=
\sum_{v=1}^{q}
e\left(\frac{av^4+mv}{q}\right).
\]
Consequently, applying Lemma \ref{orsumm} to $S_m(a,q)$, we obtain
\begin{equation}
\label{osnlemma02}
T\left(\frac{a}{q}+\lambda;x,y\right)
\ll
q^{-\frac{1}{4}}
\sum_{m\in\mathcal M}
|I_m|
+
q^{\frac{3}{4}}\ln(2+|\lambda|x^3),
\end{equation}
where
\[
\mathcal M
=
\left\{
m=b+qh:\ 1\le b\le q,\ H_{1b}\le h\le H_{2b}
\right\}.
\]

Note that the passage from the double sum over \(b,h\) to a sum over \(m\)
is valid, since, for \(1\le b\le q\), the representation of each such
\(m\) in the form $m=b+qh$
is unique.

We now estimate the sum of integrals
\begin{equation*}
\sum_{m\in\mathcal M}|I_m|,
\end{equation*}
where
\[
I_m=
\int\limits_{x-y}^{x}
e\left(\lambda u^4-\frac{m}{q} u\right)\,du.
\]
Let
\[
F_m(u)=\lambda u^4-\frac{m}{q} u.
\]
Then
\[
F_m'(u)=4\lambda u^3-\frac{m}{q},
\qquad
F_m''(u)=12\lambda u^2.
\]
Since \(y\le 0.01x\), on the interval \(x-y\le u\le x\)
we have \(u\asymp x\). Therefore,
\[
|F_m''(u)|\asymp |\lambda|x^2.
\]
The stationary points of the phase function \(F_m(u)\) are determined by the equation
\begin{equation*}
F_m'(u)=0
\end{equation*}
which is equivalent to
\[
m=4q\lambda u^3.
\]
Consequently, if a stationary point lies in the interval
\[
x-y\le u\le x,
\]
then the corresponding value \(m\) must belong to the interval
\[
\Delta=
\left[
\min\left(4q\lambda(x-y)^3,\;4q\lambda x^3\right),
\max\left(4q\lambda(x-y)^3,\;4q\lambda x^3\right)
\right].
\]
Conversely, if \(m\in\Delta\), then the equation
\[
m=4q\lambda u^3
\]
has a solution \(u\in[x-y,x]\), that is, the integral \(I_m\) has a stationary point.
Consequently, the integral \(I_m\) has a stationary point on the interval
\([x-y,x]\) if and only if \(m\in\Delta\).
The length of this interval satisfies the estimate
\[
|\Delta|
\ll
q|\lambda|\bigl(x^3-(x-y)^3\bigr).
\]
Since
\[
x^3-(x-y)^3\ll x^2y,
\]
we have
\[
|\Delta|\ll q|\lambda|x^2y.
\]

We distinguish the set of stationary and nearly stationary values
\[
\mathcal M_0=
\{m\in\mathcal M:\operatorname{dist}(m,\Delta)\le 1\},
\]
where \(\operatorname{dist}(m,\Delta)\) denotes the distance from the point \(m\)
to the interval \(\Delta\).\\
If \(\Delta=[\Delta_1,\Delta_2]\), then
\[
\operatorname{dist}(m,\Delta)=
\begin{cases}
0, & \Delta_1\le m\le \Delta_2,\\
\Delta_1-m, & m<\Delta_1,\\
m-\Delta_2, & m>\Delta_2.
\end{cases}
\]

Then
\[
|\mathcal M_0|\ll 1+|\Delta|
\ll 1+q|\lambda|x^2y.
\]
For \(m\in\mathcal M_0\), we apply Lemma \ref{vtorproiz}. Since
\[
|F_m''(u)|\asymp |\lambda|x^2,
\]
we obtain
\[
|I_m|\ll (|\lambda|x^2)^{-\frac{1}{2}}.
\]
Consequently,
\[
\sum_{m\in\mathcal M_0}|I_m|
\ll
(1+q|\lambda|x^2y)(|\lambda|x^2)^{-\frac{1}{2}}.
\]

Now consider \(m\in\mathcal M\setminus\mathcal M_0\).
Put
\[
r=\operatorname{dist}(m,\Delta).
\]

Then \(r>1\). For all \(u\in[x-y,x]\), the quantity
\(4q\lambda u^3\) belongs to the interval \(\Delta\); therefore,
\[
|4q\lambda u^3-m|\ge r.
\]
Consequently,
\[
|F_m'(u)|
=
\left|4\lambda u^3-\frac{m}{q}\right|
=
\frac{1}{q} |4q\lambda u^3-m|
\ge
\frac{r}{q}.
\]
Since \(F_m'(u)\) is monotonic on the interval \([x-y,x]\), Lemma \ref{pervproiz} gives
\[
|I_m|\ll \frac{q}{r}.
\]

The choice of the integers \(H_{1b},H_{2b}\), bounding the values of
\(f'_b(u)\) on the interval \([x-y,x]\), implies that, for all
\(m=b+qh\in\mathcal M\), the inequalities
\[
\Delta_1-q\le m\le \Delta_2+q.
\]
Indeed,
\[
\frac{m}{q}=\frac{b}{q}+h
\]
differs by at most a constant from the values of the function
\(4\lambda u^3\) on the interval \([x-y,x]\), and multiplication by \(q\)
gives a margin of \(O(q)\).

Consequently,
the remaining values \(m\in\mathcal M\setminus\mathcal M_0\) lie in
the two intervals
\[
\Delta_1-q\le m<\Delta_1-1
\]
or
\[
\Delta_2+1<m\le \Delta_2+q.
\]
Therefore,
\[
1<r\ll q.
\]
Summing over the distance \(r\), we obtain
\[
\sum_{m\in\mathcal M\setminus\mathcal M_0}|I_m|
\ll
\sum_{1< r\ll q}\frac{q}{r}
\ll
q\ln q.
\]

Thus,
\[
\sum_{m\in\mathcal M}|I_m|
\ll
q\ln q
+
(1+q|\lambda|x^2y)(|\lambda|x^2)^{-\frac{1}{2}}.
\]

Substituting this into (\ref{osnlemma02}), we obtain the following estimate for \(T\):
\[
T\left(\frac{a}{q}+\lambda;x,y\right)
\ll
q^{\frac{3}{4}}\ln q
+
q^{-\frac{1}{4}}(|\lambda|x^2)^{-\frac{1}{2}}
+
q^{\frac{3}{4}}y(|\lambda|x^2)^{\frac{1}{2}}
+
q^{\frac{3}{4}}\ln(2+|\lambda|x^3).
\]

From the lower bound \(
|\lambda|\gg \frac{1}{qx^2y}
\)
it follows that
\(
|\lambda|x^2\gg \frac{1}{qy}.
\)
Therefore,
\[
q^{-\frac{1}{4}}(|\lambda|x^2)^{-\frac{1}{2}}
\ll
q^{-\frac{1}{4}}(qy)^{\frac{1}{2}}
=
q^{\frac{1}{4}}y^{\frac{1}{2}}
\le
q^{\frac{1}{4}}\frac{x}{y^{\frac{1}{2}}}.
\]
On the other hand, from the upper bound
\(
|\lambda|\le \frac{1}{qy^3}
\)
we obtain
\(
|\lambda|x^2\le \frac{x^2}{qy^3}.
\)
Consequently,
\[
q^{\frac{3}{4}}y(|\lambda|x^2)^{\frac{1}{2}}
\le
q^{\frac{3}{4}}y
\left(\frac{x^2}{qy^3}\right)^{\frac{1}{2}}
=
q^{\frac{1}{4}}\frac{x}{y^{\frac{1}{2}}}.
\]
Thus,
\[
T\left(\frac{a}{q}+\lambda;x,y\right)
\ll
q^{\frac{3}{4}}\ln q
+
q^{\frac{3}{4}}\ln(2+|\lambda|x^3)
+
q^{\frac{1}{4}}\frac{x}{\sqrt{y}}.
\]

This completes the proof of Theorem~\ref{thm:quartic-sum}.
\end{proof}
\section{Application to Waring's problem}
\begin{proof}[{\bf Proof of Theorem \ref{varing4}}]
Assume that
$N^{\frac{6}{25}+\varepsilon}\leq H\leq N^{\frac{1}{4}-\varepsilon}$.
Let $Q=\dfrac{H}{L}$,
$\tau~=~48(N_1+H)^2H$, $\kappa \tau =1$, $E=[-\kappa ,1-\kappa ]$.

Applying Lemma \ref{int1}, we have
 $$
 J(N,H)= \sum_{|x_i-N_1|\le H,\atop
 i=1,\ldots, 17}\quad
 \int_{-\kappa}^{1-\kappa}e(\alpha(x_{1}^4+x_{2}^4+\ldots+x_{17}^4-N))d\alpha=
 $$
$$
=\int\limits_E\left(\sum_{|n-N_1|\le H}e(\alpha n^4)\right)^{17}
e(-\alpha N)d\alpha=
$$
$$
=\int\limits_E\left(T(\alpha ;N_1+H,2H)+\theta\right)^{17} e(-\alpha
N)d\alpha ,
$$

where
\[
\theta=\theta(\alpha)=
\begin{cases}
e\!\left(\alpha(N_1-H)^4\right), & N_1-H\in\mathbb Z,\\
0, & N_1-H\notin\mathbb Z.
\end{cases}
\]
Thus, \(|\theta|\leq 1\).

Using the relation
$$
\left(T(\alpha ;N_1+H,2H)+\theta\right)^{17}-T^{17}(\alpha
;N_1+H,2H)\ll\left|T(\alpha ;N_1+H,2H)\right|^{16}+1,
$$
and Lemma \ref{osrzn} with $k=4$, we find
\begin{align*}
&\int\limits_E\left|T(\alpha ;N_1+H,2H)\right|^{16}d\alpha\ll
H^{12+\varepsilon}=\frac{H^{16}}{N^{\frac{3}{4}}L^{16}}\cdot \frac{N^{\frac{3}{4}}L^{16}}{H^{4-\varepsilon}}\le \\
&\leq \frac{H^{16}}{N^{\frac{3}{4}}L^{16}}\cdot
N^{-\frac{21}{100}-\frac{94}{25}\varepsilon+\varepsilon^2}L^{16}\ll
\frac{H^{16}}{N^{\frac{3}{4}}L^{16}}.
\end{align*}

Therefore,
$$
J(N,H)=\int\limits_ET^{17}(\alpha; N_1+H,2H)e(-\alpha N)d\alpha
+O\left(\frac{H^{16}}{N^{\frac{3}{4}}L^{16}}\right).
$$

By Dirichlet's lemma (Lemma \ref{lemmaDirichle}) on approximation
of real numbers by rational numbers, every $\alpha $ in
the interval $E$ can be represented in the form
\begin{equation}\label{osntheor1}
 \alpha =\frac{a}{q}+\lambda ,\quad (a,q)=1,\quad 1\le
q\le \tau , \quad |\lambda |\le \frac{1}{q\tau }.
\end{equation}
It is easy to see that, in this representation, $0\le a\le q-1$, with
$a=0$ only when $q=1$. 

We define 
\[
E_1=
\bigcup_{1\le q\le Q}
\ \bigcup_{\substack{0\le a<q\\(a,q)=1}}
\left\{
\alpha\in E:
\left|\alpha-\frac{a}{q}\right|
\le\frac{1}{q\tau}
\right\},
\quad
E_2=E\setminus E_1.
\]
The set $E_1$ consists of
pairwise disjoint intervals. Indeed, $E_1$ consists of
intervals $E(a,q)$ of the form
\begin{align*}
\frac{a}{q}&-\frac{1}{q\tau}\le \alpha\le
\frac{a}{q}+\frac{1}{q\tau},\ 0\le a<q,\\
& (a,q)=1, q=1,2,\ldots, [Q].
\end{align*}
If $E(a,q)$ and $E(a_1,q_1)$ are two different intervals of $E_1$, that is,
$(a-a_1)^2+(q-q_1)^2\ne 0$, then the distance between the centers of these
intervals is
$$
\left|\frac{a}{q}-\frac{a_1}{q_1}\right|\ge\frac{1}{qq_1},
$$
whereas the sum of their half-lengths is
$$
\frac{1}{q\tau}+\frac{1}{q_1\tau}<\frac{1}{qq_1}.
$$
Consequently, $E(a,q)$ and $E(a_1,q_1)$ do not intersect.

We divide the set $E_1$ into the sets
$E_{11}$ and $E_{12}$:
\begin{align*}
 & E_{11}=\left\{ \alpha :\ \alpha \in E_1,\ \left|\alpha
-\frac{a}{q}\right|\le \delta \right\}, \quad \delta=
\frac{1}{8q(N_1+H)^3};  \\
 &   E_{12}=\left\{ \alpha :\ \alpha \in E_1,\ \delta<\left|\alpha -\frac{a}{q}\right|\le \frac{1}{q\tau}\right\} .
\end{align*}

Let $J_{11}$,
$J_{12}$, and $J_2$ denote, respectively, the integrals over the sets
$E_{11}$, $E_{12}$, and $E_2$. We then have
\begin{equation}\label{osntheor2}
J(N,H)=J_{11}+J_{12}+J_2+O\left(\frac{H^{16}}{N^{\frac{3}{4}}L^{16}}\right).
\end{equation}
In the decomposition (\ref{osntheor2}), the integral \(J_{11}\) gives the main term
of the asymptotic formula for \(J(N,H)\), whereas \(J_{12}\) and \(J_2\)
must be estimated as parts of the remainder term.

The argument of \cite{Azamov2011}, based on Lemmas \ref{exposumm1} and \ref{exposumm2}, yields
the following estimates for the integrals \(J_{11}\) and \(J_{12}\):
  \begin{equation}
\label{osntheor10} J_{11}=\frac{B\mathfrak{S}(N)H^{16}}{N^{\frac{3}{4}}}\left(1+O\left(L^{-16}\right)\right).
\end{equation}
 \begin{equation}
\label{I12} J_{12}\ll\frac{H^{16}}{N^{\frac{3}{4}}L^{16}}.
\end{equation}
Repeating the proof of the corresponding estimates from~\cite{Azamov2011} and
noting that, for
\[
 H\ge N^{\frac{6}{25}+\varepsilon}
\]
the conditions for applying Lemmas \ref{exposumm1} and \ref{exposumm2}, as well as the relations
between the parameters used there, remain valid, we obtain that the estimates
(\ref{osntheor10}) and (\ref{I12}) remain valid in the case under
consideration.

However, direct application of the estimates from~\cite{Azamov2011} to the integral \(J_2\) in
the case under consideration no longer gives the required estimate of the form
\[
 J_2\ll \frac{H^{16}}{N^{\frac{3}{4}}L^{16}}.
\]
Therefore, to complete the proof, it is necessary to obtain a sharper
estimate for the integral \(J_2\). The remainder of the
proof is devoted to this estimate.

\textbf{Estimate for the integral $J_2$.} We have
\begin{equation*}
\label{osntheor13}
|J_{2}|\le 
\int\limits_{E_2}\left|T(\alpha; N_1+H,2H)\right|^{17}d\alpha .
\end{equation*}

We estimate $T(\alpha ;N_1+H,2H)$
for $\alpha $ in the set $E_2$.

For each \(\alpha\in E_2\), apply Lemma \ref{lemmaDirichle} with the parameter
\[
P=H^3.
\]
Then there exist coprime integers \(a,r\) such that
\[
\alpha=\frac{a}{r}+\beta,
\qquad
1\le r\le H^3,
\qquad
|\beta|\le \frac{1}{rH^3}.
\]

We divide \(E_2\) into two parts:
\[
E_2=E_{21}\cup E_{22},
\]
where
\[
E_{21}=\{\alpha\in E_2:\ r>Q\},
\]
and
\[
E_{22}=\{\alpha\in E_2:\ r\le Q\}.
\]
Accordingly,
\begin{equation}
\label{osntheor14}
J_2=J_{21}+J_{22},
\end{equation}
where
\[
|J_{2j}|\le\int\limits_{E_{2j}}|T(\alpha; N_1+H,2H)|^{17}\,d\alpha,
\qquad j=1,2.
\]
\textbf{Estimate for the integral $J_{21}$.}

Let \(\alpha\in E_{21}\). Then
\[
Q<r\le H^3.
\]
By Weyl's inequality (Lemma \ref{Veyl}) for a short quartic sum, we have
\[
T(\alpha)\ll
H^{1+\varepsilon}
\left(
\frac{1}{r}+\frac{1}{H}+\frac{r}{H^4}
\right)^{1/8}.
\]
Since
\[
r>Q=\frac{H}{L},
\]
we have
\[
\frac{1}{r}\le \frac{L}{H}.
\]
Moreover, \(r\le H^3\) implies
\[
\frac {r}{H^4}\le \frac{1}{H}.
\]
Consequently,
\[
\frac{1}{r}+\frac{1}{H}+\frac{r}{H^4}
\ll
\frac{L}{H}.
\]
Hence
\[
T(\alpha)
\ll
H^{1+\varepsilon}
\left(\frac{L}{H}\right)^{\frac{1}{8}}
\ll
H^{\frac{7}{8}+\varepsilon}.
\]
Consequently,
\[
\max_{\alpha\in E_{21}}|T(\alpha ;N_1+H,2H)|
\ll
H^{\frac{7}{8}+\varepsilon},
\]
\[
|J_{21}|
\le
\left(\max_{\alpha\in E_{21}}|T(\alpha ;N_1+H,2H)|\right)
\int\limits_E |T(\alpha ;N_1+H,2H)|^{16}\,d\alpha.
\]
Applying Lemma \ref{osrzn}, we obtain
\[
\int\limits_E |T(\alpha ;N_1+H,2H)|^{16}\,d\alpha
\ll H^{12+\varepsilon}.
\]
Consequently,
\begin{equation}
\label{osntheor15}
J_{21}
\ll
H^{\frac{7}{8}+\varepsilon}H^{12+\varepsilon}
\ll
H^{\frac{103}{8}+\varepsilon}.
\end{equation}

\textbf{Estimate for the integral $J_{22}$.}
We now estimate the integral
\[
|J_{22}|\le \int\limits_{E_{22}}|T(\alpha; N_1+H,2H)|^{17}\,d\alpha .
\]
Let
\[
N_1=\left(\frac {N}{17}\right)^{\frac{1}{4}},\qquad
T(\alpha;N_1+H,2H)
=\sum_{N_1-H<n\le N_1+H}e(\alpha n^4).
\]

In the notation of Theorem \ref{thm:quartic-sum}, put
\[
x=N_1+H,\qquad y=2H.
\]
Then
\[
x^2y=2(N_1+H)^2H.
\]

Let \(\alpha\in E_{22}\). Then, by the definition of the set \(E_{22}\),
we have
\[
\alpha=\frac{a}{r}+\beta,\qquad (a,r)=1,
\]
where
\[
r\le Q,\qquad |\beta|\le \frac{1}{rH^3}.
\]
Since \(\alpha\notin E_1\),
\[
|\beta|>\frac{1}{r\tau}.
\]
With our choice,
\[
\tau=48(N_1+H)^2H=24x^2y.
\]
Consequently,
\[
|\beta|>\frac{1}{24rx^2y},
\]
that is,
\[
|\beta|\gg \frac{1}{rx^2y}.
\]
On the other hand, since \(y=2H\), the condition
\[
|\beta|\le \frac{1}{rH^3}
\]
implies
\[
|\beta|\le \frac{8}{ry^3}\ll \frac{1}{ry^3}.
\]
Thus the hypotheses of Theorem~\ref{thm:quartic-sum} are satisfied,
and therefore
\[
T\left(\frac{a}{r}+\beta;N_1+H,2H\right)
\ll
r^{\frac{3}{4}}\ln r
+
r^{\frac{3}{4}}\ln(2+|\beta|N_1^3)
+
r^{\frac{1}{4}}\frac{N_1}{H^{1/2}}\ll Q^{\frac{3}{4}}\ln N+ Q^{\frac{1}{4}}\frac{N^{\frac{1}{4}}}{H^{\frac{1}{2}}}.
\]
Since $Q=\frac{H}{L}$,
\[
Q^{\frac{3}{4}}L
\ll
H^{\frac{3}{4}}L^{\frac{1}{4}}
\ll
H^{\frac{7}{8}}.
\]
The condition \(H\ge N^{\frac{6}{25}+\varepsilon}\) implies
\[
Q^{\frac{1}{4}}\frac{N^{\frac{1}{4}}}{H^{\frac{1}{2}}}
\ll
H^{-\frac{1}{4}}N^{\frac{1}{4}}
=
H^{\frac{7}{8}}H^{-\frac{9}{8}}N^{\frac{1}{4}}
\ll
H^{\frac{7}{8}}N^{-\frac{1}{50}-\frac{9}{8}\varepsilon}
\ll
H^{\frac{7}{8}}.
\]

Thus,
\[
\max_{\alpha\in E_{22}}|T(\alpha; N_1+H,2H)|
\ll
H^{\frac{7}{8}}.
\]

Next, using Lemma \ref{osrzn}, we have
\begin{equation}
\label{osntheor16}
|J_{22}|
\le
\int\limits_{E_{22}}|T(\alpha)|^{17}\,d\alpha
\le
\max_{\alpha\in E_{22}}|T(\alpha)|
\int_E |T(\alpha)|^{16}\,d\alpha\ll H^{\frac{7}{8}}H^{12+\varepsilon}\ll H^{\frac{103}{8}+\varepsilon}.
\end{equation}

Substituting the estimates (\ref{osntheor15}) and (\ref{osntheor16}) into (\ref{osntheor14}), we find
\begin{equation}
\label{osntheor17}
J_{2}\ll H^{\frac{103}{8}+\varepsilon}=\frac{H^{16}}{N^{\frac{3}{4}}L^{16}}\cdot \frac{N^{\frac{3}{4}}L^{16}}{H^{\frac{25}{8}-\varepsilon}} \ll \frac{H^{16}}{N^{\frac{3}{4}}L^{16}}\cdot N^{-\frac{577}{200}\varepsilon+\varepsilon^2}L^{16}\ll \frac{H^{16}}{N^{\frac{3}{4}}L^{16}}
\end{equation}

Substituting the estimates obtained for $J_{11}$, $J_{12}$,
and $J_2$ from (\ref{osntheor10}), (\ref{I12}), and (\ref{osntheor17}), respectively, into (\ref{osntheor2}),
we obtain the assertion of the theorem.
\end{proof}

\end{document}